\documentclass[a4paper,12pt]{article}

\usepackage{amsthm,amsmath,stmaryrd,bbm,hyperref,geometry,color,authblk}
\usepackage[utf8]{inputenc}
\usepackage{amssymb}
\usepackage[english]{babel}
\usepackage{graphicx}
\usepackage{amsfonts,amssymb}
\usepackage{verbatim}
\usepackage{enumerate}
\usepackage{diagbox}

\newcommand{\po}{\left(}
\newcommand{\pf}{\right)}
\newcommand{\co}{\left[}
\newcommand{\cf}{\right]}
\newcommand{\cco}{\llbracket}
\newcommand{\ccf}{\rrbracket}
\newcommand{\R}{\mathbb R}

\newcommand{\N}{\mathbb N}

\newcommand{\dd}{\text{d}}

\newcommand{\na}{\nabla}
\newcommand{\1}{\mathbbm{1}}

\newtheorem{thm}{Theorem}

\newtheorem*{assu*}{Assumption}

\newtheorem{proposition}[thm]{Proposition}
\newtheorem{remark}{Remark}

\title{General-purpose post-sampling reweighting method for multimodal target measures}
\author[1,2]{Pierre Monmarché\footnote{pierre.monmarche@univ-eiffel.fr}}
\affil[1]{LAMA, Université Gustave Eiffel, 77420 Champs-sur-Marne, France.}
\affil[2]{Institut Universitaire de France}

\begin{document}
\maketitle

\begin{abstract}
When sampling multi-modal probability distributions, correctly estimating  the relative probability of each mode, even when the modes have been discovered and locally sampled, remains challenging. We test a simple reweighting scheme designed for this situation, which consists in minimizing (in terms of weights) the Kullback-Leibler divergence of a weighted (regularized) empirical distribution of the samples with respect to the target measure. 
\end{abstract}

\section{Introduction}\label{sec:intro}

\paragraph{Sampling multimodal distributions.} Consider the problem of sampling a probability measure $\mu$ on $\R^d$ with density proportional to $e^{-U}$ with a known $U:\R^d \rightarrow \R$. In many applications  of statistical physics or Bayesian statistics (e.g. molecular simulations, model selection\dots)  the target $\mu$ is multi-modal, namely $U$ has several local minima, and  some domains with relatively large probability are separated by areas of much lower probability.
This makes the task difficult for the most standard approach, which is Markov Chain Monte Carlo (MCMC). Indeed, in this situation, schematically, we ask the MCMC sampler to solve three problems:
\begin{enumerate}[(i)]
\item Find the modes.
\item Sample locally each mode.
\item Estimate the relative weights between the different modes.
\end{enumerate}
However, usually, practical MCMC samplers are only efficient for the second point. Indeed, in most cases of interest, the problem is high-dimensional but the mass of $\mu$ is concentrated in the vicinity of some manifold of dimension much smaller than $d$, which makes large anisotropic jumps (uninformed of the global geometry of $\mu$) unpracticable. This only leaves MCMC samplers with local moves, either discretizations of continuous trajectories (Hamiltonian dynamics or diffusion processes) or chains with small jumps (e.g. Gaussian steps with a vanishing variance as $d$ increases), using local informations such as $U$ and $\na U$ evaluated at the current point or neighboring states. In the presence of multimodality, these local samplers exhibit a metastable behavior, meaning that transitions between modes is a rare event while the chain remains trapped for very long times in each mode it visits. As a consequence, the exploration of the space and thus the discovery of new modes is extremely slow. Worse, the correct equilibration between the modes requires to observe a lot of transitions, which happens at a time-scale out of reach in many practical situations.

Many variants of MCMC have been introduced to address this issue, based among other ideas on (adaptive) importance sampling (aimed at sampling a less metastable modification of the target) \cite{invernizzi2022exploration,Comer} or annealed or sequential sampling (sampling a path of probability measures, starting from a simple one and ending with the target) \cite{del2006sequential,annealing,chehab2024provable}. In adaptive methods, along the sampling, some global knowledge on $\mu$ is learned (e.g. good collective variables \cite{belkacemi2021chasing,trizio2024advanced,ribeiro2018reweighted}, associated free energies \cite{Comer,M66}, scores or similar optimal drifts \cite{albergo2024nets}, densities~\cite{Gabrie2021a}). This often helps for solving both points (i) and (iii) (although it is not completely clear, for instance, that a good collective variable to describe a given mode is a good variable to distinguish the different modes).  We refer to \cite{Gabrie} for a review of many of these methods, and comparison on a common benchmark.

 In the present work, only point (iii) is addressed. This is based on the idea that, in many situations, it is in fact better to deal separately with the three parts of the problem. Due to dimensional complexity, solving point (i) in general is an extremely hard problem. In practice, each application has its model-dependent heuristics, exploiting additional structure or a-priori informations on the potential landscape. In any cases, when trying to solve (i) without dealing with (ii) and (iii), we do not have to keep track of the measure which is sampled by the method (contrary to e.g. importance sampling), so that we can use random initialisations with gradient descent (as is standard for non-convex optimization in machine learning), or completely non-equilibrium methods (e.g. saddle point search \cite{lelievre2024using,journel2023switched}) or possibly adaptive methods (e.g. OPES~\cite{invernizzi2022exploration}, or Gaussian-accelerated simulations~\cite{celerse2022efficient}) with very ``aggressive" parameters (which would yield a poor sampling due to high weight variance). In some situations, the modes may even be known a priori (for instance in protein-ligand binding in molecular simulations~\cite{ebrahimi2022symmetry}). In this paper we assume that step (i) has been addressed by some methods and that the modes are given. If some modes have been missed at this point, the method presented below will not find them. Moreover, we have in mind a situation where there is no more than a few dozens of modes (corresponding e.g. to relevant macroscopic states in molecular simulations, each of which  gathering possibly several close small-scale local energy minima).

As already mentioned, local MCMC are usually quite efficient for step (ii), so we assume now that we have samples in each mode. For instance, once the modes have been identified, we may simply run independent MCMC samplers initialized at each of these modes. Our goal is to perform step (iii) in this situation. Notice that, even without distinguishing the three steps, the question is also of interest in many cases with current practical methods (for instance adaptive importance sampling) because equilibration between modes happen at a time-scale larger than mode exploration and local sampling, which means that in practice the simulations often end with many modes being locally sampled but an incorrect relative weights between them. In the vocabulary of statistical physics, step (iii) corresponds to estimating free energy differences. Standard methods such a Bennet acceptance ratio~\cite{jia2020normalizing,bennett1976efficient} or targeted free energy \cite{jarzynski2002targeted,wirnsberger2020targeted} require to have some overlap between the samples in each state or to have some transformation between them, which is not easily obtained when the states are very separated and no intermediate samples in the separating region are available.

\paragraph{Our approach.} In summary, motivated by the above three-step viewpoint to multimodal sampling, our goal is to introduce a very simple general method for step (iii) when the modes have already been found and sampled. Contrary to most approaches we are aware of (for instance those reviewed in \cite{Gabrie}, or discussed in \cite[Appendix A]{lange2022interpolating}), the method does not produce any new sample (or doesn't add any new component to a mixture of parametrized distributions):  this is  a purely post-sampling step, which can be performed at the end of any sampling procedure.  In other words, it addresses the following question: when our sampling budget is all spent, what is the best we can do with its output ?

 In a word, the method is a variational inference (VI) scheme, in the sense that it aims at minimizing the (reverse) Kullback-Leibler (KL) divergence (aka relative entropy) $\rho \mapsto \mathrm{KL}(\rho|\mu) = \int_{\R^d} \ln(\rho/\mu)\rho $ over a family of probability measures of the form $\rho = \sum_{k=1}^K p_k \nu_k$ with weights $p$ in the simplex $\Delta_K=\{p\in [0,1]^K,\ \sum_{k=1}^K p_k = 1\}$. In standard VI approaches, the distributions $\nu_k$ would also be in some parametric family of probability measures (typically, Gaussian, or in more general exponential families \cite{jerfel2021variational,minka2001family,campbell2019universal}). The specificity of the method in the present work is that, instead, in a somewhat semi-parametric way, we use for $\nu_k$ the empirical distribution of the local samples in mode $k$ (up to regularization so that the KL makes sense). The idea of combining MCMC sampling with VI goes back at least to   \cite{DeFreitas} and has been derived in a variety of schemes since then (as further discussed below) but we haven't been able to find previous references on the method studied here. It should be emphasized that  considering that the modes are known is obviously  a very strong assumption, while most of the existing enhanced/adaptive sampling or VI methods for multi-modal sampling are designed to address (i), (ii) and (iii) simultaneously. However, as discussed above, the specific question of solving (iii) alone is also of interest. The good results of the method obtained in our numerical experiments hence motivated the present report.

\paragraph{Organization of the work.}  Related methods are discussed in the end of the introduction. The method is described in Section~\ref{sec:themethod}. Section~\ref{sec:theory} provides a brief theoretical discussion, while numerical experiments are gathered in Section~\ref{sec:numerique}.

\paragraph{Relation with other methods.} Since~\cite{DeFreitas}, a variety of methods have combined VI and sampling.  Sometimes MCMC is used to provide a richer family of probability measures for VI (e.g. by propagating for a MCMC few steps a parametrized measure~\cite{ruiz2019contrastive,salimans2015markov}, or by sampling latent variables from which the conditional densities of the observations are parametrized~\cite{hoffman2017learning}) -- our method can be seen like this, in an extreme way since the component of the mixture $\pi(p)$ are just directly given by sampling --, or VI is used to design (with mixture of Gaussian distributions, normalizing flows, etc.) good proposal kernels for importance sampling or MCMC \cite{Gabrie2021a,zimmermann2021nested,domke2018importance}, see also references within. In this last situation, and similarly in adaptive biasing importance sampling methods \cite{invernizzi2022exploration,Comer}, the idea is that, once the modes have been found, the adaptive mechanism will converge to a stationary Markovian dynamics (or an importance sampling proposal for i.i.d. samples) which performs transitions between modes easily, either thanks to a proposal which allows for jumps between modes, or because some high-probability paths have been created between them. This means that, once this idealized sampler has been reached, we still have to wait for the Monte-Carlo estimation of the relative weights. By contrast, the point of view taken in the present work is that, since the local MCMC samples are good, we don't expect to get more information by producing more samples and we estimate deterministically the weights.

Another  method related to ours is the confinment approach in \cite{cecchini2009calculation}, which also attempts at estimating the relative weights without having to observe transitions between the states, by progressively transforming each mode into a Gaussian distribution, estimating the free energy difference along this transformation, and then using that the absolute free energy is explicit in the harmonic case.

In \cite{molina2024active}, densities are estimated in each mode, and then a mixture of these densities is used in a Metropolis-Hastings MCMC procedure. Contrary to our method, the weights of the mixture are adaptively estimated along the MCMC sampling.

\section{The method}\label{sec:themethod}

\subsection{Prerequisite} \label{sec:prerequis}

The whole algorithm described in the present work gathers several parts. Each of these steps have to be efficiently addressed in practical cases, which, it should be emphasized, may raise difficulties. However, the focus of this work is on the specific optimization problem in the space of probability measure that is used. The other ingredients are by themselves the topics of specific fields of research, to which we do not claim any contribution here. We describe them in the following.

\begin{enumerate}
\item \textbf{Input data.} We assume that we have $N$ samples $(x_i)_{i\in\cco 1,N\ccf} \in (\R^d)^N$, and that the energies $(U(x_i))_{i\in\cco 1,N\ccf}$ have been computed. As discussed in Section~\ref{sec:intro}, we typically think of these samples as having been obtained either by first exploring the energy landscape to find the modes of $\mu \propto e^{-U}$ and then by using standard MCMC samplers in each mode, or by a biased MCMC sampler. In the rest of the method, no other sample will be generated. In particular, as already mentioned, if a mode is not represented in the samples, it will obviously be missed by the method.

\item \textbf{Clustering.} We assume that the samples are sorted in $K \in \cco 2,N\ccf $ clusters, i.e. that we have a partition $\cco 1,N\ccf = \sqcup_{k=1}^K I_k$ so that $(x_i)_{i\in I_k}$ is the $k^{th}$ cluster. Write $n_k=\mathrm{Card}(I_k)$.  There may be several methods to define the clusters. If the samples have been generated by local MCMC samplers initialized at different local minimizers of $U$, then all the points generated starting from a given minimizer can be gathered in the same cluster. Alternatively, if we are given the $N$ samples without any indication, we may use any general-purpose clustering algorithm \cite{ezugwu2022comprehensive,westerlund2019inflecs} (which, in the absence of further information, may also be used to determine $K$). The clustering may also come from a partition along the value of some reaction coordinates (as in Figure~\ref{fig-Langevin} below). Avoiding this clustering step by chosing $K=N$ is also possible (see the last numerical experiment in Section~\ref{sec:numerique}) but for now and in most of the work we have in mind the case where $K \ll N$, and specifically $K \ll \min\{n_k,\ 1\leqslant k \leqslant K\}$.

\item \textbf{Density estimation.} We assume that in each cluster $k\in\cco 1,K\ccf$, we have an estimate $\nu_k(x)$ of the density of $(x_i)_{i\in I_k}$, i.e. a function $\nu_k:\R^d \rightarrow \R_+$ which can be evaluated in practice and should be such that, in some suitable sense,
\begin{equation}
\label{eq:estim-densite}
\frac1{n_k} \sum_{j\in I_k} \delta_{x_j} \ \simeq\ \nu_k(x) \dd x\,.
\end{equation}
There is a large literature on this central problem of data analysis, see \cite{wang2019nonparametric,biroli2026kernel}. Notice that the clustering step is expected to lead the density in each cluster to be relatively simple (e.g. essentially unimodal), avoiding (or at least mitigating) difficulties such as mode collapse \cite{soletskyi2025theoretical}. A difficulty is that $d$ is typically large. In many high-dimensional applications, an important point is to find some features (also called collective variables or reaction coordinates in the field of molecular simulations) $\xi:\R^d\rightarrow \R^m$ which are low-dimensional (or at least moderate-dimensional, e.g. $m \simeq 100$, in any case small with respect to $d$) and such that $\xi(x)$ contains much of the information of the sample $x$ \cite{belkacemi2021chasing,trizio2024advanced,doi:10.1021/acs.jpcb.3c07075}. Typically, this means that the conditional distribution of $x$ given $\xi(x)$, under $\mu$ or under the clustered densities $\nu_k$, should be simple. As a consequence, a natural way to perform the density estimation in $\R^d$ is to find a reparametrization (i.e. a $\mathcal C^1$ diffeomorphism) of the form $\varphi(x) = (\xi(x),\xi^{\perp}(x))$, estimate the marginal density of $\xi(x)$ by standard methods in moderate dimension (e.g. Gaussian kernel estimation), then the conditional density of $\xi^{\perp}(x)$  given $\xi(x)$ by a Gaussian density (see \cite{dutordoir2018gaussian} and references within for variations), and then obtain the density $\nu_k$ by the change of variable formula (see the examples in Section~\ref{sec:numerique}).
\end{enumerate}

\subsection{The optimization problem}

We use the notations introduced in the previous section.

\paragraph{Parametrization.} For $p$ in the simplex $\Delta_K = \{p\in[0,1]^K,\sum_{i=1}^K p_i = 1\}$, we write 
\begin{align}
\label{eq:defpi(p)}
\pi(p) &= \sum_{k=1}^K  p_k \nu_k \,.
\end{align}
Our goal is to determine $p\in\Delta_K$ so that $\pi(p)$ is in a suitable sense the best approximation of $\mu$ that we can get simply by reweighting the samples between different clusters. Denoting by $p^*\in\Delta_K$ the parameter obtained at the end of the algorithm, we will  thus have, first, in a VI spirit, $\pi(p^*)$ as a computable density approximation of $\mu$ and, second, alternatively, thanks to~\eqref{eq:estim-densite}, the possibility to estimate the  expectation of an observable $f\in L^1(\mu)$ with respect to $\mu$ with the reweighted Monte Carlo estimator
\begin{equation}
\label{eq:weightedestimator}
\int_{\R^d} f(x)\mu( \dd x) \simeq \sum_{k=1}^K \frac{p_k^*}{n_k}  \sum_{j\in I_k} f(x_j)\,.
\end{equation}

\paragraph{Objective function.} We would like to determine $p^*$ by minimizing
\begin{align*}
p \mapsto J(p) := \mathrm{KL}\po \pi(p)|\mu\pf &= \int_{\R^d}  \co U(x) + \ln\pi(p)(x)\cf \pi(p)(x)\dd x  + \mathrm{constant} \,.
\end{align*}
This is particularly  consistent if the local samples have been generated with an overdamped Langevin scheme, since the latter is the Wasserstein gradient flow of $\rho \mapsto \mathrm{KL}(\rho|\mu)$. In this situation, both the local samplers and the reweighting step have the same objective. Moreover, if they have been obtained by an MCMC sampler targetting $\mu$ and initialized in a mode of $\mu$, the $\nu_k$'s should be concentrated in regions of the space that have a non-negligible mass with respect to $\mu$, making $\mathrm{KL}\po \pi(p)|\mu\pf$ a well-behaved quantity (by contrast with $\mathrm{KL}\po \mu| \pi(p)\pf$). A known issue when minimizing  $\mathrm{KL}\po \cdot |\mu\pf$ is that the tail of the distribution is underestimated \cite{jerfel2021variational}, but we are focusing on localized data (most molecular simulations for instance are in a compact set with boundary conditions).

\paragraph{Optimization.} Since $J$ and the simplex are convex, the optimization problem is nice. Among other possibilities, in this work we use the projected exponential gradient descent as in \cite{kivinen1997exponentiated}, given by
\begin{equation}\label{eq1}
p_{k}^{m+1} = \frac{p_k^m \exp\po -\delta \partial_{p_k} J(p^{m})\pf}{\sum_{\ell=1}^K p_\ell^m \exp\po -\delta \partial_{p_\ell} J(p^{m})\pf }\,,\qquad k\in\cco 1,K\ccf,
\end{equation}
for a step-size $\delta>0$, with the gradient of $J$ (seen as a function in $\R^d$; notice that~\eqref{eq1} is already taking into account the constraints on $p \in \Delta_K$) given by
\[\partial_{p_k} J(p) = 1+ \int_{\R^d} \po U + \ln \pi(p) \pf \nu_k\]
(we can ignore the additive constant $1$, which disappears in~\eqref{eq1}). This quantity is not tractable so that, using~\eqref{eq:estim-densite}, we  use the stochastic gradient approximation 
\begin{equation}
\label{eq:gradient-sto}
\partial_{p_k} J(p) \simeq  1+ V_k +   \frac{1}{n_k} \sum_{j\in I_k}  \ln \pi(p)(x_j) 
= 1+ V_k +   \frac{1}{n_k} \sum_{j\in I_k}  \ln \po \sum_{\ell=1}^K p_\ell \nu_\ell(x_j)\pf 
\end{equation}
with
\[V_k = \frac{1}{n_k} \sum_{j\in I_k}  U(x_j)\,, \]
computed once at the beginning of the optimization procedure. If $n_k$ is large, at each gradient descent iteration, the last sum in \eqref{eq:gradient-sto} can be further approximated by subsampling.

\begin{remark}[Case without overlap]\label{rem:approximation}  When the clusters are well separated (which may be the case in some applications as protein-ligand binding~\cite{ebrahimi2022symmetry} or if the samples have been re-grouped using a spatial clustering as in Figure~\ref{fig-Langevin}), in the last term of~\eqref{eq:gradient-sto}, for $j\in I_k$, $\sum_{\ell=1}^K p_\ell \nu_\ell(x_j) \simeq p_k \nu_{k}(x_j)$ (for a given $p\in\Delta_K$ with $p_k\neq 0$). Plugging this approximation in~\eqref{eq:gradient-sto} gives 
\begin{equation}
\label{eq:gradient-sto2}
\partial_{p_k} J(p) \simeq  1+ W_k +    \ln \po  p_k \pf 
\end{equation}
with
\[W_k = \frac{1}{n_k} \sum_{j\in I_k}  \co U(x_j) + \ln \nu_k(x_j)\cf\,. \]
Instead of running the gradient descent with this approximation, we can directly compute its limit, which is
\begin{equation}
\label{eq:p*no-overlap}
p_k^* = \frac{e^{-W_k}}{\sum_{\ell=1}^K e^{-W_\ell}}\,.
\end{equation}
This is the unique minimizer in $\Delta_K$ of 
\[p\mapsto \sum_{k=1}^K p_k \po \ln p_k + W_k\pf\,, \]
which is indeed an approximation of $J$ provided the density estimation~\eqref{eq:estim-densite} is good and there is no overlap between the modes. When the separation assumption on the modes is not reasonable, the weights~\eqref{eq:p*no-overlap} can serve as initialization for the gradient descent.

Notice that, if the densities $\nu_k$ were not obtained from the empirical distributions of local samples but from a parametrized family as in standard VI~\cite{jerfel2021variational,minka2001family,campbell2019universal,miller2017variational}, then this no-overlap situation would typically not be met since several component would have to be combined to give a good representation of each mode. 
\end{remark}

\begin{remark}[Dimension reduction]\label{rem:reduction}
Assume that the states  can be decomposed as $x =(y,z)\in\R^{n+m}$ where $y$ are the main variables and $z$ are some high-dimensional non-informative fluctuations. Assume further that the conditional density $\nu(z|y)$ of $z$ given $y$ is the same in all clusters, and is the Gaussian density $\mathcal N(m_z(y),\Sigma_z)$ for some function $m:\R^n\rightarrow\R^m$ and a fixed covariance matrix $\Sigma_z$. Then, for $p\in\Delta_K$ denoting by $\nu_k^y$ the marginal density of $y$ in cluster $k$ and by $\pi^y(p) = \sum_{k=1}^K p_k \nu_k^y$,
\[\int_{\R^d} \pi(p) \ln \pi(p)  = \int_{\R^n} \pi^y(p) \ln \pi^y(p) + \int_{\R^n} \pi^y(p)(y) \co \int_{\R^m} \nu(\cdot|y) \ln\nu(\cdot|y)\cf  \dd y   \,. \]
The entropy of a Gaussian distribution being independent from its mean,  $\int_{\R^m} \nu(\cdot|y) \ln\nu(\cdot|y)$ is in fact independent from $y$, so that the last term is independent from $p$. This means that, in this situation, when computing $J$, or more precisely its gradient, it is sufficient to estimate the marginal densities of the variables $y$. The algorithm can be applied based only on the data $(y_i,U(x_i))$, i.e. the variables $z_i$ can be discarded (making the density estimation much easier).

In more general situations this discussion can serve as a justification to replace the $d$-dimensional problem by a smaller $n$-dimensional problem (with, for instance, $n$ over the order $100$ as in Section~\ref{sec:numerique}) by discarding some variables, even if  the discarded variables have a conditional density with respect to the other variables which is not exactly the same in each cluster of which is not exactly Gaussian with a fixed variance, as long as this condition is approximately met.

\end{remark}

\section{Analysis in simple situations}\label{sec:theory}

\subsection{Case with full support}

In order to get some insights on the approximations made during the derivation of the algorithm, in this section we  consider the following toy problem: assume that $\mu = \sum_{k=1}^K p_k^* \nu_k$ for some $p^*\in\Delta_K$ and some \emph{known} probability densities $\nu_k$, and that  for each mode $k\in \cco 1,K\ccf$  we are given $N$ i.i.d. samples $(X_{1,k},\dots,X_{N,k})$ distributed according to $\nu_k$. In particular, $p^*$ is clearly the unique minimizer of
\[p \mapsto J(p) = \mathrm{KL}(\pi(p)| \mu) := \mathrm{KL}(\pi(p)| \pi(p^*))\,,\]
with $\pi(p)$ given by the expression~\eqref{eq:defpi(p)} (although here this is with the true densities $\nu_k$ from which the data has been sampled, instead of an estimation as in the genuine algorithm).  Moreover, $J$ is strictly convex, and more precisely a direct computation shows that, for all $u\in\R^K$ and $p\in\Delta_K$,
\[\sum_{i,j=1}^K u_iu_j \partial_{p_i}\partial_{p_j} J(p) = \int_{\R^d} \frac{\po \sum_{i=1}^K \nu_i(y) u_i\pf^2 }{\sum_{i=1}^K p_i \nu_i(y)}\dd y\,.\]
A simple (but not necessary) condition for $\na^2 J(p)$ to be bounded from below uniformly in $p\in\Delta_K$ is for instance that $\underline{\nu} :=\min_{1\leqslant i\leqslant N} \nu_i$ to be non-zero and $\overline{\nu} := \max_{1\leqslant i\leqslant N} \nu_i$ to be bounded, in which case $J$ is $\lambda$-convex with  
\[\lambda \geqslant \int_{\R^d} \frac{\underline{\nu}^2(y)  }{\overline {\nu}(y)}\dd y >0\,. \]
As a consequence, the gradient descent $(p^m)_{m\in\N}$ given in~\eqref{eq1}  converges quickly to the global minimizer $p_*$.

However, due to the approximation~\eqref{eq:gradient-sto}, what we have is $(q^m)_{m\in\N}$ given by
\begin{equation}\label{eq:descente-approx}
q_{k}^{m+1} = \frac{q_k^m \exp\po -\delta G_k(q^{m})\pf}{\sum_{\ell=1}^K q_\ell^m \exp\po -\delta G_\ell(q^{m})\pf }\,,\qquad k\in\cco 1,K\ccf,
\end{equation}
with, denoting $U = -\ln \pi(p^*)$,
\[G_k(p) = \frac1N \sum_{i=1}^N U(X_{i,k})  +   \frac{1}{N} \sum_{i=1}^N  \ln \po \sum_{\ell=1}^K p_\ell \nu_\ell(X_{i,k})\pf \,. \]
In the present framework, this is an unbiased estimator of $\na J(p)$, i.e. for all $p\in\Delta_K$,
\begin{equation}
\label{loc:sto}
\mathbb E \po G_k(p)\pf = \partial_{p_k} J(p)\,. 
\end{equation}
However, contrary to the more classical settings of stochastic gradient descent, the Monte Carlo estimation of the gradient is not renewed at each iterations: the samples $X_{i,k}$ are the same over the whole trajectory. In particular, $q^m$ is random and depends on the samples, and thus, conditionally to $q^m$, $G_k(q^m)$ is not an unbiased estimator of $\partial_{p_k} J(q^m)$.

The next result shows that, as a function of $N$, the error due to this approximation still follows the classical Monte Carlo rate.

\begin{proposition}
Assume that there exists $C>0$ such that for all $x\in\R^d$ and $k\in\cco 1,K\ccf$,
\begin{equation}
\label{eq:hypothese}
C^{-1} e^{-C|x|^2} \leqslant \nu_k(x) \leqslant C e^{-|x|^2/C}\,.
\end{equation}
Then there exists $L>0$ such that, for any initial condition $r^0\in\Delta_K$, considering $(p^m)_{m\in\N}$ and $(q^m)_{m\in\N}$ respectively given by~\eqref{eq1} and \eqref{eq:descente-approx} with $p^0 = q^0=r^0$, for all $m\in\N$,
\begin{equation}
\label{eq:prop}
\mathbb E \po |p^m- q^m |^2\pf \leqslant L e^{L \delta m}\po N^{-1} + \delta^2\pf \,.
\end{equation}
\end{proposition}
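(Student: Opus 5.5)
We compare the two recursions by a discrete Grönwall argument. The key device is to bound the gradient noise uniformly over the simplex, so that the dependence of $q^m$ on the samples does no harm. Write both schemes as $r^{m+1}=\Phi_\delta(r^m,F(r^m))$, where
\[
\Phi_\delta(r,g)_k=\frac{r_k e^{-\delta g_k}}{\sum_\ell r_\ell e^{-\delta g_\ell}},
\]
with $F=\na J$ for $p^m$ and $F=G$ for $q^m$. Since adding a constant to all coordinates of $g$ leaves $\Phi_\delta$ unchanged, constants play no role. A Taylor expansion in $\delta$ gives
\[
\Phi_\delta(r,g)_k=r_k-\delta r_k\Big(g_k-\sum_\ell r_\ell g_\ell\Big)+R_\delta(r,g),\qquad |R_\delta(r,g)|\leqslant C\delta^2|g|^2e^{2\delta|g|_\infty}.
\]
Consequently, the map $r\mapsto\Phi_\delta(r,g)$ is $(1+C\delta(1+|g|))$-Lipschitz on $\Delta_K$. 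It is also $C\delta$-Lipschitz in $g$, uniformly for bounded $\delta|g|$.

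\textbf{Step 1: regularity of $\na J$ on $\Delta_K$.} Under~\eqref{eq:hypothese}, for every $p\in\Delta_K$ and every $y$,
\[
C^{-1}e^{-C|y|^2}\leqslant\pi(p)(y)\leqslant Ce^{-|y|^2/C},
\]
so $|\ln\pi(p)(y)|\leqslant C'(1+|y|^2)$. Moreover $|\partial_{p_\ell}\ln\pi(p)(y)|=\nu_\ell(y)/\pi(p)(y)$, which is bounded by $C^2e^{C|y|^2}$. This quantity is not integrable against $\nu_k$, so the Lipschitz estimate has to be obtained differently. We use $p\mapsto\ln\pi(p)(y)$ together with the Hessian formula stated just before the proposition. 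That formula gives
\[
\partial_{p_i}\partial_{p_k}J(p)=\int\frac{\nu_i\nu_k}{\pi(p)}.
\]
Since $\nu_i/\pi(p)\leqslant 1/p_i$, this is finite only away from the boundary. To avoid the boundary issue, I would instead bound the Lipschitz constant of $p\mapsto \Phi_\delta(p,\na J(p))$ directly. Its $k$-th coordinate's derivative involves $p_k\,\partial_{p_i}\partial_{p_k}J$, and
\[
p_k\int\frac{\nu_i\nu_k}{\pi(p)}\leqslant\int\nu_i=1,
\]
because $p_k\nu_k\leqslant\pi(p)$. The factor $r_k$ in the multiplicative update thus tames the singularity. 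The same observation applies to $G$, since
\[
\frac{p_k\,\nu_i(X_{j,k})\,\nu_k(X_{j,k})}{\pi(p)(X_{j,k})}\leqslant \nu_i(X_{j,k}),
\]
and the empirical mean of $\nu_i(X_{j,k})$ has bounded moments by the Gaussian upper bound.

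\textbf{Step 2: uniform law of large numbers.} Set $E:=\sup_{p\in\Delta_K}|G(p)-\na J(p)|$ (up to constants). We need $\mathbb E(E^2)\leqslant C/N$. For fixed $p$ this holds by~\eqref{loc:sto} and a variance bound, since $\ln\pi(p)$ has quadratic growth and $\nu_k$ has Gaussian moments. Uniformity in $p$ is the main obstacle, because of the blow-up of $\partial_p\ln\pi(p)$ near the boundary. I would handle it as follows. The function $p\mapsto G_k(p)-\partial_{p_k}J(p)$ is the centered empirical mean of $f_p(X)=\ln\pi(p)(X)$. Apply a chaining or Sobolev embedding argument in the $K-1$-dimensional variable $p$, using derivatives weighted by the $p$ coordinates. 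That is, control $\mathbb E\sup_p|\cdot|^2$ by
\[
\sum_{|\alpha|\leqslant K}\int_{\Delta_K}\mathbb E\,\big|(p\partial_p)^\alpha(\cdots)\big|^2\,\dd p,
\]
each term being $O(1/N)$ because $(p\partial_p)^\alpha\ln\pi(p)$ is uniformly bounded by the ratio trick above. Alternatively, the singular part can be avoided entirely by only needing $E$ along the vector field $r_k(g_k-\sum r_\ell g_\ell)$, which is what enters $\Phi_\delta$.

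\textbf{Step 3: Grönwall.} With $e_m=|p^m-q^m|$, Steps 1–2 and the expansion of $\Phi_\delta$ give
\[
e_{m+1}\leqslant(1+L\delta)e_m+C\delta E+C\delta^2(1+|G(q^m)|^2).
\]
Here $|G(q^m)|$ is bounded by $C(1+\text{empirical moments})$, again by the ratio trick; on the event where $\delta|G|$ is large, the crude bound $e_m\leqslant2$ and Markov's inequality suffice. Iterating gives
\[
e_m\leqslant e^{L\delta m}(E+\delta)\,C.
\]
Squaring and taking expectations then yields~\eqref{eq:prop}.
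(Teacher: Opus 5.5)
Your architecture differs from the paper's, and it is viable. You use the Lipschitz property of the \emph{deterministic} map $r\mapsto\Phi_\delta(r,\na J(r))$. Your bound $p_k\,\partial_{p_i}\partial_{p_k}J\leqslant 1$, together with $\sum_\ell p_\ell\,\partial_{p_i}\partial_{p_\ell}J=1$, does give a $(1+L\delta)$-Lipschitz constant. All the randomness then goes into $|G(q^m)-\na J(q^m)|\leqslant E:=\sup_{p\in\Delta_K}|G(p)-\na J(p)|$, so everything rests on $\mathbb E(E^2)\leqslant C/N$. You rightly flag this as the main obstacle, but the argument you give for it fails.

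The inequality $\mathbb E\sup_p|h|^2\lesssim\sum_{|\alpha|\leqslant K}\int_{\Delta_K}\mathbb E|(p\partial_p)^\alpha h|^2\,\dd p$ is false. The operators $p_i\partial_{p_i}$ degenerate on the faces of the simplex. In the variables $s_i=\ln p_i$ they are ordinary derivatives on an unbounded domain, which carries only exponentially small mass under $\dd p$, so they cannot control a supremum. For instance $h(p)=\ln p_1$ has $(p\partial_p)^\alpha h\in\{0,1\}$ for all $|\alpha|\geqslant 1$ and $h\in L^2(\Delta_K)$, yet $\sup|h|=\infty$. Chaining with these derivatives has the same defect, since in the $s$ variables the index set has infinite diameter. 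Unweighted derivatives do not help either, since $\partial_{p_\ell}\ln\pi(p)=\nu_\ell/\pi(p)$ need not have finite second moments. Your remark about restricting to the vector field $r_k(g_k-\sum_\ell r_\ell g_\ell)$ still needs a supremum over the data-dependent point $q^m$. So the central step is unproved.

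Your Step~1 bound for $G$ also treats the wrong quantity. The empirical mean replaces the integral against $\nu_k$, so there is no factor $\nu_k(X_{j,k})$. The correct estimate is $p_k\partial_{p_i}G_k(p)\leqslant\frac1N\sum_j(\nu_i/\nu_k)(X_{j,k})$, a likelihood ratio with mean one but possibly infinite variance under \eqref{eq:hypothese}.

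The paper avoids any uniform law of large numbers by inserting $\tilde p^{m+1}=\Phi_\delta(p^m,G(p^m))$, the noisy step taken from the \emph{deterministic} iterate. Then $|p^{m+1}-\tilde p^{m+1}|$ involves only $G(p^m)-\na J(p^m)$ at a non-random point. There, \eqref{loc:sto} and a variance bound uniform in $p$ give $\sum_k\mathbb E|\partial_{p_k}J(p^m)-G_k(p^m)|^2\leqslant M_5/N$. Meanwhile $|\tilde p^{m+1}-q^{m+1}|$ compares one and the same random map at $p^m$ and $q^m$, and is handled through \eqref{boundG} on the event $\mathcal A$. Young's inequality with weights $1+\delta$ and $1+\delta^{-1}$, followed by Gr\"onwall, concludes.

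If you keep your route, the missing lemma is true, but it needs empirical-process tools rather than Sobolev embedding. The class $\{x\mapsto\ln\pi(p)(x)\}_{p\in\Delta_K}$ is the image under the monotone map $\ln$ of a subset of the $K$-dimensional span of $\nu_1,\dots,\nu_K$, hence VC-subgraph. Its envelope $\ln C+C|x|^2$ is square-integrable under each $\nu_k$. A standard maximal inequality (uniform entropy integral) then yields $\mathbb E\sup_{p}|G_k(p)-\partial_{p_k}J(p)|^2\lesssim 1/N$.

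With this lemma your decomposition closes. It even gives \eqref{eq:prop} without the $\delta^2$ term, because $g\mapsto\Phi_\delta(r,g)$ is $\delta$-Lipschitz for every $g$, with no restriction on $\delta|g|$. Note also that your route never needs Lipschitz control of the random map $r\mapsto\Phi_\delta(r,G(r))$. The paper's route does, and that is exactly where the likelihood ratios $\frac1N\sum_j(\nu_i/\nu_k)(X_{j,k})$ enter.
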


Since the limit problem (taking $N\rightarrow \infty$) is globally converging (along the descent iterations $m$), we may expect a similar result to hold uniformly in time, i.e.~\eqref{eq:prop} with a right hand side independent from $m$. In practice this doesn't change much since we only perform short trajectories.

\begin{proof}
Under the condition~\ref{eq:hypothese}, it is clear that
\[\sup_{p\in\Delta_K}\max_{k\in\cco 1,K\ccf} |\partial_{p_k} J(p) | =: M_0 <\infty\,. \]
Moreover, by standard large deviation results, we can find $M_1>0$ large enough so that the event
\[\mathcal A= \left\{ \max_{k\in\cco 1,K\ccf} \frac{1}{N} \sum_{j=1}^N |X_{j,k}|^2 \leqslant M_1 \right\}\]
has a probability larger than $1- e^{-N}$. Then, there is a constant $M_2>0$ such that, under $\mathcal A$,
\begin{equation}
\label{boundG}
\sup_{p\in\Delta_K}\max_{k\in\cco 1,K\ccf}|G_k(p)|\leqslant M_2\,,\qquad \sup_{p,q\in\Delta_K}\max_{k\in\cco 1,K\ccf}|G_k(p)-G_k(q)|\leqslant M_2|q-p|\,.
\end{equation}
For $m\in\N$, we simply bound
\[ \mathbb E \po |p^{m+1}- q^{m+1} |^2\pf \leqslant \mathbb E \po |p^{m+1}- q^{m+1} |^2 \1_{\mathcal A}\pf + 4 e^{-N}\,. \]
The second term being negligible with respect to the right hand side of~\eqref{eq:prop}, we may simply ignore it. To treat the first term, we introduce
\[\tilde p_{k}^{m+1} = \frac{p_k^m \exp\po -\delta G_k(p^{m})\pf}{\sum_{\ell=1}^K p_\ell^m \exp\po -\delta G_\ell(p^{m})\pf } \]
and bound, for $\delta\in(0,1]$,
\begin{eqnarray*}
\lefteqn{\mathbb E \po |p^{m+1}- q^{m+1} |^2 \1_{\mathcal A}\pf}\\
 &\leqslant &  (1+\delta)\mathbb E \po |\tilde p^{m+1}- q^{m+1} |^2 \1_{\mathcal A}\pf + \po 1+\delta^{-1}\pf \mathbb E \po |\tilde p^{m+1}- p^{m+1} |^2 \1_{\mathcal A}\pf \\
& \leqslant &  (1+M_4\delta) \mathbb E \po |p^{m}- q^{m} |^2 \1_{\mathcal A}\pf  + M_4 \delta\sum_{k=1}^K \mathbb E \po |\partial_{p_k} J(p^m)-G_k(p^m)|^2\pf + M_4 \delta^3
\end{eqnarray*}
for some constant $M_4>0$,  where we used for the first term that $\mathcal A$ implies~\eqref{boundG} and we used in the second the extension (in $\delta$)
\[\left |\tilde p^{m+1}_k - p_k^m + \delta  p_k^m\co G_k(p^m) - \sum_{\ell=1}^K p_\ell^m G_\ell(p^m)\cf \right|\1_{\mathcal A} \leqslant M_4 \delta^2 \,,\]
and similarly for $p^{m+1}$. Finally, since $p^m$ is deterministic, we can use~\eqref{loc:sto} and that, under~\eqref{eq:hypothese}, for eack $k\in\cco 1,K\ccf$, the random variable $U(X_{1,k})  +     \ln \po \sum_{\ell=1}^K p_\ell \nu_\ell(X_{1,k})\pf $ is bounded from above and below by a quadratic function independent from $p$, hence has a variance bounded uniformly in $p$, so that 
\[\sum_{k=1}^K \mathbb E \po |\partial_{p_k} J(p^m)-G_k(p^m)|^2\pf \leqslant \frac{M_5}{N}\]
for some $M_5>0$. Conclusion follows from Grönwall's lemma.
\end{proof}

\subsection{Case with disjoint supports}

In fact, due to our motivation to sample separate modes, we will see that the experiments in Section~\ref{sec:numerique} do not satisfy~\eqref{eq:hypothese}, and are better described by an even simpler condition: the estimated densities $\nu_k$ all have disjoint support.

Assume that this holds and denote by $\mathcal D_k$ the support of $\nu_k$ and
\[\mu_k(x) = \frac{\mu(x) \1_{\mathcal D_k}(x)}{\mu(\mathcal D_k)}\,,\qquad q^*_k= \frac{\mu(\mathcal D_k)}{\sum_{\ell=1}^K \mu(\mathcal D_\ell)}\,,\qquad r= 1 - \sum_{\ell=1}^K \mu(\mathcal D_\ell) \]
so that $\mu_{\mathcal D}:=\sum_{k=1}^K q_k^* \mu_k$ is  the measure $\mu$ conditioned on $\mathcal D= \cup_{k=1}^K \mathcal D_k$.  Even when the missing mass $r$ is positive, determining $q_k^*$ is enough to get ratios of probabilities (free energy differences) since
\[\frac{q_k^*}{q_\ell^*} = \frac{\mu(\mathcal D_k)}{\mu(\mathcal D_\ell)}\]
(in this disjoint support case, we retrieve as discussed e.g. in \cite[Section 2]{Gabrie} the link between the mode weight estimation and the computation of the normalization constant $\mu(\mathcal D_k)$ of the measure $\1_{\mathcal D_k} \mu$). More generally the goal of the algorithm in this situation is to find the best approximation of $\mu_{\mathcal D}$.

In the present settings,  the approximation discussed in Remark~\ref{rem:approximation} is in fact exact and thus the algorithm outputs the weights proportional to $p_k^*\propto e^{-W_k}$ with
\begin{equation}
\label{locWk}
W_k = \frac{1}{n_k} \sum_{j\in I_k}  \co U(x_j) + \ln \nu_k(x_j)\cf  = \frac{1}{n_k} \sum_{j\in I_k} \ln \po \frac{\nu_k(x_j)}{q_k^*\mu_k(x_j)}\pf - \ln(1-r)\,.
\end{equation}
  In other words,
  \[p_k^* \propto  q_k^* \exp\co \frac{1}{n_k} \sum_{j\in I_k} \ln \po \frac{\mu_k(x_j)}{\nu_k(x_j)}\pf  \cf\,.\]
  Assuming that the condition~\eqref{eq:estim-densite} is satisfied as $n_k\rightarrow \infty$ (which is true for instance if the samples in cluster $k$ are i.i.d. with law $\nu_k$, or are given by an ergodic MCMC sampler with target measure $\nu_k$), then 
  \begin{equation}
\label{loceq}  
  p_k^* \underset{\min_{\ell} n_\ell \rightarrow \infty}{\longrightarrow} \bar p_k^* \propto  q_k^* \exp\co - \mathrm{KL}(\nu_k|\mu_k)  \cf\,, 
  \end{equation}
which is the minimizer of 
\begin{equation}
\label{locJp}
J(p) = \sum_{k=1}^K p_k\int_{\mathcal D_k} \po U + \ln (p_k\nu_k)\pf  \nu_k\,. 
\end{equation}
 If $\nu_k = \mu_k$, the correct weights are recovered. However in general if $\nu_k\neq \mu_k$ there is still an error, which is expected since within a cluster the different samples are not re-weighted one relatively to the others. It is interesting to see that this asymptotic error is minimized by minimizing $\mathrm{KL}(\nu_k|\mu_k) $ (which is in fact simply a consequence of the extensivity property of the relative entropy), which is precisely the quantity that the algorithm aims at minimizing at the global level. This means that, in multi-scale situations, it is consistent to apply the method hierarchically, i.e. combining small clusters together with re-weighting in small groups, and then merging some of these groups with a new relative re-weighting, etc. According to~\eqref{loceq}, taking the optimal weights at the smaller scale minimizes the error at the larger scales.

\begin{remark}[Dimension reduction bis] If the approximation in Remark~\ref{rem:reduction} is not correct but we have non-overlaping cluster and a decomposition $x=(y,z)\in\R^{n+m}$ between low-dimensional important variables $y$ and less informative high dimensional variables, assuming that the conditional density of $z$ given $y$ in cluster $k$ is Gaussian with law $\nu_k(\cdot|y)=\mathcal N(m_k(y),\Sigma_k(y))$, we can decompose
\begin{align*}
\partial_{p_k} J(p) &= 1+ \int_{\R^d} U \nu_k + \ln p_k + \int_{\R^d} \ln(\nu_k)\nu_k\\
& = 1+ \int_{\R^d} U \nu_k + \ln p_k + \int_{\R^n}  \co \ln(\nu_k^y) + \int_{\R^m} \nu_k(z|y)\ln \nu_k(z|y)\dd z  \cf \nu_k^y(y)\dd y\,.
\end{align*}
Instead of estimating the conditional Gaussian density of $z$ and then use~\eqref{locWk}, it is then possible to use a direct estimation of   the entropy of a Gaussian distribution, see  \cite{CAI2015161} and references within. This only leaves a density estimation necessary for the marginal $\nu^y$ in dimension $m$.

When we use this dimension reduction, we do not need to reconstitute the variables $z$ from the variables $y$, contrary to methods which use the density estimation to design adaptive importance sampling or Metropolis-Hastings proposals \cite{schonle2025sampling,schonle2025efficient}. More generally, in our method in the non-overlapping case, the density estimation is an intermediate tool which is only used to estimate the entropy in each mode.
\end{remark}

\section{Numerical experiments}\label{sec:numerique}

\paragraph{Benchmark.} We follow the settings recently advocated by \cite{Gabrie} to evaluate the efficiency of samplers for multi-modal target distributions. The target is a Gaussian mixture
\[\mu = p_{\mathrm{true}}^* \mathcal N\po a \mathbf{1}_d, \Sigma_1\pf + \po 1 - p_{\mathrm{true}}^*\pf \mathcal N\po -a \mathbf{1}_d, \Sigma_2\pf \]
for some $a>0$, where $\mathbf{1}_d=(1,\dots,1)\in\R^d$, and the covariance matrices $\Sigma_1$ and $\Sigma_2$ are diagonal with 
\begin{equation}
\label{eq:Sigma1-2}
(\Sigma_1)_{i,i} = (\Sigma_2)_{d-i,d-i} =  \frac{d-i}{d-1} \sigma_{\mathrm{min}}^2 + \frac{i-1}{d-1} \sigma_{\mathrm{max}}^2 \,,\qquad i\in\cco 1,d\ccf,
\end{equation}
with $\sigma_{\mathrm{max}}^2= 0.2$ and $\sigma_{\mathrm{min}}^2 =0.01$.

We consider two clusters of samples, constituted by $N=1000$  i.i.d. realizations of, respectively, $\mathcal N\po a \mathbf{1}_d, \Sigma_1\pf $ and $\mathcal N\po -a \mathbf{1}_d, \Sigma_2\pf $. To estimate the densities in each cluster, as discussed in Section~\ref{sec:prerequis}, we identify  a set of important variables $\xi(x)$. Here we simply take the $\ell=\min(d,10)$ coordinates with maximal variance (which would be easily recovered with a principal component analysis even without knowing $\Sigma_1$ and $\Sigma_2$), namely $\xi(x)=(x_{d-\ell+1},\dots,x_d)$ in the first cluster and $\xi(x) = (x_1,\dots,x_\ell)$ in the second one. To estimate the marginal density of $\xi(x)$, we use in each cluster a Gaussian kernel density estimator (specifically we used the \emph{GaussianKDEs.jl} Julia package). Then, when $d>\ell$, separately in each cluster, we estimate the conditional density of the remaining variables $\xi^{\perp}(x)$ given $\xi(x)$ as a Gaussian density $\mathcal N(\tilde m + \tilde \alpha \xi(x),\tilde \Sigma)$, with $\tilde \Sigma$ the empirical covariance matrix of the samples, $\tilde m$ their empirical mean and $\tilde \alpha = \tilde \Sigma_{\xi}^{-1} \tilde C$ with $\tilde \Sigma$ (resp. $\tilde C$) the empirical covariance matrix of $\xi(x)$ (resp. of $\xi(x)$ and $\xi^{\perp}(x)$), these choices being optimal in the least-square sense among densities of the form $\mathcal N(m'+\alpha' \xi(x),\Sigma')$ for $m'\in\R^{d-\ell}$, $\alpha'\in\R^{\ell\times(d-\ell)}$ and $\Sigma' \in \R^{(d-\ell)\times(d-\ell)}$.

The gradient descent~\eqref{eq1}  is run for $N_{\mathrm{iter}}=1000$ iterations, with a step-size $\delta=0.05$. It is initialized with~\eqref{eq:p*no-overlap}.

We take $p^*_{\mathrm{true}}=0.7$ and,  following~\cite{Gabrie}, we test the values $a= \{ 0.5, 2.875, 5.25, 7.625, 10\}$ and $d=\{4, 8, 16, 32, 64, 128, 256\}$, running the algorithm independently $M=48$ times for each value $(a,d)$, in order to estimate the bias and variance of its output. The results are respectively provided in Figures~\ref{fig-biais} and \ref{fig-variance}. We see that the algorithm performs very well with all these parameters. In particular, the efficiency is hardly affected by $a$ (for large $a$ in fact, the problem is even simpler since the initial guess~\eqref{eq:p*no-overlap} is already very good). Notice that comparing our results with those of the methods reviewed in \cite{Gabrie} would be unfair since in our case we consider that the modes (i.e. the position of the Gaussian distributions in the mixture) are known. This is also for this reason that the model of Gaussian mixture with same weights considered in \cite{albergo2024nets,midgley2023flow} is not a relevant test model in our situation.

\begin{figure}
\centering
\[
\begin{array}{|l|c|c|c|c|c|c|c|c|c|c|}
\hline
\hbox{\diagbox{$a$}{$d$}}
 & 
 4 & 8 & 16 & 32 & 64 & 128 & 256 \\ 
\hline
0.5   & 5.10^{-4} 	& 1.10^{-4} & 	2.10^{-4}	& 3.10^{-5} & 	2.10^{-3} & 	6.10^{-3} & 	2.10^{-3} \\ \hline
2.875 &  4.10^{-4}	& 1.10^{-3} 	&  1.10^{-4}	& 2.10^{-4} 	& 1.10^{-3} 	& 1.10^{-3} & 3.10^{-3}\\ \hline
5.25    & 3.10^{-4}	& 2.10^{-4}	& 2.10^{-4}	 & 9.10^{-4}	& 2.10^{-4}	& 8.10^{-4}	& 9.10^{-3}\\ \hline
7.625   & 3.10^{-4}	& 2.10^{-4}	& 2.10^{-4}	 &  7.10^{-4}	& 6.10^{-5}	& 1.10^{-3}	& 1.10^{-2}\\ \hline
10   &  3.10^{-4}	& 3.10^{-4} 	& 5.10^{-4} &  9.10^{-4}	& 3.10^{-4} & 3.10^{-3}    & 9.10^{-3}\\ \hline
\end{array}
\]
\caption{Bias  estimated on $M=48$ runs, for various values of $(a,d)$.}\label{fig-biais}
\end{figure}

\begin{figure}
\centering
\[
\begin{array}{|l|c|c|c|c|c|c|c|c|c|c|}
\hline
\hbox{\diagbox{$a$}{$d$}}
 & 
 4 & 8 & 16 & 32 & 64 & 128 & 256 \\ 
\hline
0.5   & 1.10^{-5} 	& 9.10^{-6} & 	7.10^{-6}	& 2.10^{-5} & 	9.10^{-5} & 	3.10^{-4} & 	1.10^{-3} \\ \hline
2.875 &  1.10^{-5}	& 6.10^{-6} 	&  6.10^{-6}	& 3.10^{-5} 	& 1.10^{-4} 	& 4.10^{-4} & 1.10^{-3}\\ \hline
5.25    & 2.10^{-5}	& 9.10^{-6}	& 8.10^{-6}	 & 2.10^{-5}	& 1.10^{-4}	& 3.10^{-4}	& 1.10^{-3}\\ \hline
7.625   & 1.10^{-5}	& 1.10^{-5}	& 8.10^{-6}	 &  2.10^{-5}	& 7.10^{-5}	& 4.10^{-4}	& 1.10^{-3}\\ \hline
10   &  1.10^{-5}	& 7.10^{-6} 	& 1.10^{-5} &  3.10^{-5}	& 1.10^{-4} & 3.10^{-4}    & 1.10^{-3}\\ \hline
\end{array}
\]
\caption{Variance estimated on $M=48$ runs, for various values of $(a,d)$.}\label{fig-variance}
\end{figure}

\paragraph{Evolution along the descent iterations.}  In order to check whether a clear separation of the modes is necessary for the algorithm to work, and the respective impact of the initialization~\eqref{eq:p*no-overlap} and of the subsequent gradient descent, we perform further experiments at the fixed dimension $d=100$. All the other parameters are the same as before, except for $a,\sigma_{\mathrm{min}}^2$ and $\sigma_{\mathrm{max}}^2$ which vary. For each choice of the latter, we plot in Figure~\ref{fig-Niter} the gradient descent trajectory $p_1^m$ as a function of $m\in\cco 0,N_{\mathrm{iter}}\ccf$, initialized either at~\eqref{eq:p*no-overlap} or at $0.5$ (respectively in blue and orange). Three cases are first considered: 1) when $a=0$ and $\sigma_{\mathrm{min}}^2=\sigma_{\mathrm{min}}^2=0.2$ (up left), the samples are indistinguishable, the model is not identifiable and, as expected, the weights are not recovered; 2) when $a=0$,  $\sigma_{\mathrm{min}}^2=0.01$ and $\sigma_{\mathrm{min}}^2=0.2$ (up right), although the centers of the clusters are the same, the high-dimensional densities are very different and thus the weights are easily recovered ; 3) the choice $(a,\sigma_{\mathrm{min}}^2,\sigma_{\mathrm{max}}^2) = (0.05,0.15,0.2)$ (bottom left) shows an intermediate situation where the weights are not perfectly recovered. In these three simulations, we see that the gradient descent initialized at~\eqref{eq:p*no-overlap} hardly move along the iterations. This is because, in dimension $d=100$, with the present model, the relative entropy of one mode with respect to the other is very high as soon as $a\neq 0$ (which modifies all the coordinates) or $\sigma_{\mathrm{min}}^2\neq \sigma_{\mathrm{max}}^2$, so that the approximation discussed in Remark~\ref{rem:approximation} is very good. To illustrate a different situation, we consider a fourth case (bottom right in Figure~\ref{fig-Niter}) in dimension $d=1$ with $\sigma_{\mathrm{min}}^2= \sigma_{\mathrm{max}}^2=\sigma^2 =0.2$ and $a=0.1$. We see that in this case, the first guess~\eqref{eq:p*no-overlap} is incorrect, but the gradient descent converges to the correct value (in this simulation, convergence is not completely achieved after $1000$ iterations, so we take $N_{\mathrm{iter}}=2000$).

\begin{figure}
\begin{center}
\begin{minipage}{0.45\linewidth}
\includegraphics[width=\textwidth]{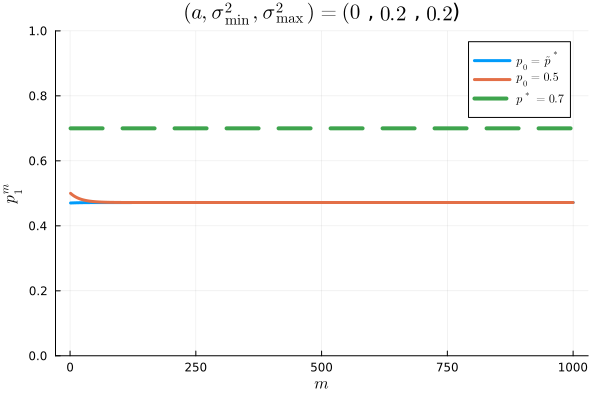}
\end{minipage}
\begin{minipage}{0.45\linewidth}
\includegraphics[width=\textwidth]{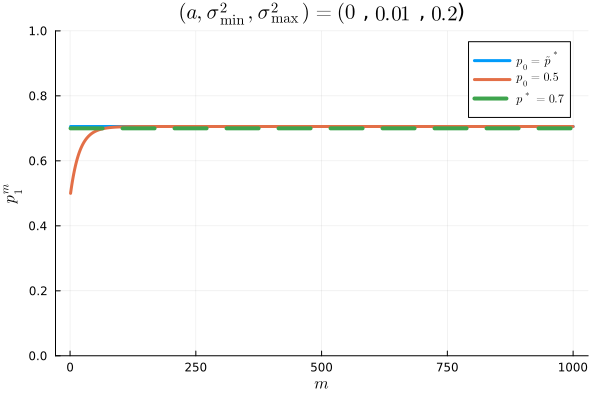}
\end{minipage}
\begin{minipage}{0.45\linewidth}
\includegraphics[width=\textwidth]{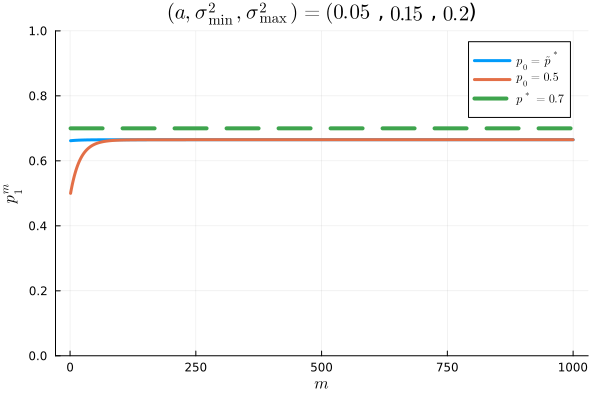}
\end{minipage}
\begin{minipage}{0.45\linewidth}
\includegraphics[width=\textwidth]{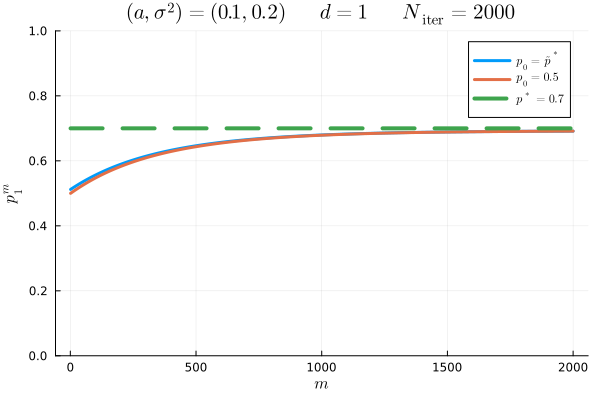}
\end{minipage}
\end{center}
\caption{Evolution of the estimator along the gradient descent. The blue line (resp. orange) is initialized at~\eqref{eq:p*no-overlap}  (resp. 0.5), and the dashed green line is the truth $p^*_{\mathrm{true}}=0.7$.  }\label{fig-Niter}
\end{figure}

\paragraph{Influence of the number of samples} The third  experiment aims at assessing the effect of the approximation~\eqref{eq:estim-densite} by varying the number $N$ of samples in each mode. We take the same parameters as in Figures~\ref{fig-biais} and~\ref{fig-variance}, except that we fix $d=50$, $a=1$, and test $9$ values of $N$, spaced from $100$ to $4000$. We estimate the bias and variance of the estimator on $M=100$ independent runs. The result is given in Figure~\ref{fig:N}. We see that the error indeed decreases with $N$, and is already small when $N=500$. Notice that the model considered here is easy in terms of density estimation because each cluster simply follows a Gaussian distribution with independent coordinates (although we do not specifically use this information in the algorithm, it is such that our ``approximation" of the conditional density of the $d-\ell$ low-variance variables given the $\ell$ high-variance ones by a Gaussian distribution is in fact exact here).

\begin{figure}
\begin{center}
\includegraphics[scale=0.4]{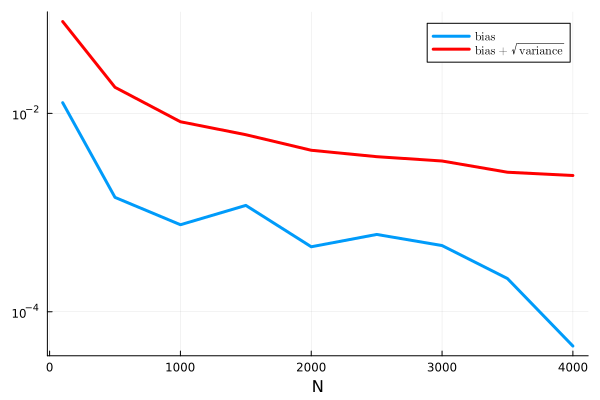}
\end{center}
\caption{Bias (plus standard deviation in red) as a function of the number $N$ of samples per mode (in log-scale), estimated on $M=100$ runs.}\label{fig:N}
\end{figure}

\paragraph{More modes.}
Fourth, to illustrate the capacity of the scheme to deal with more modes, we consider a mixture of $K=10$ Gaussian distributions, in dimension $d=100$. For the weights $p^*_{\mathrm{true}}$, we take $0.4$, $0.3$ and $0.1$ for the three first coordinates, and for the rest we take $K-3$ i.i.d. uniform random variables rescaled to sum up to $0.2$. For the parameters of each Gaussian distribution, half of them (clusters $k$ with odd $k$) have variance $\Sigma_1$ and the other half has variance $\Sigma_2$, as given by~\eqref{eq:Sigma1-2}. The means are generated as $K$ independent variables with law $\mathcal N(0,I_d)$. 

We then follow the same set-up as in the previous experiments: we sample $N=5000$ i.i.d. samples for each of the $K$ Gaussian distributions, we estimate the density with a Gaussian KDE for the $\ell=10$ variables with maximal variance (i.e. $\cco 91,100\ccf$ and $\cco 1,10\ccf$ when the variance is respectively $\Sigma_1$ and $\Sigma_2$) and the conditional density of the remaining variables is estimated with a Gaussian distribution (with fixed covariance matrix, and expectation given as an affine function of the $\ell$ other variables, as before). The gradient descent~\eqref{eq1} is performed for $N_{\mathrm{iter}}=1000$ iterations with a stepsize $\delta=0.05$.

The results are provided in Figure~\ref{fig-K10}.  The first picture (left) gives the histogram of the first variable in each cluster, weighted by $p^*_{\mathrm{true}}$ (so that summing these histograms gives a good approximation of the target distribution). The second picture (right), similar to those of Figure~\ref{fig-Niter}, shows the evolution of the four first coeficients of $p^m$ along the iterations $m$ of the gradient descent, either initialized with~\eqref{eq:p*no-overlap} (solid line) or with the uniform distribution $p^*_k = 1/K$ (dots). The true values are given in dashed lines. We use one color per coordinate (blue for the first, red for the second, then green and yellow). We see that the order of the weights is correctly recovered (i.e. $p_1^m>p_2^m>p_3^m>p_4^m$), with moreover a good approximation for the first and fourth weights, and a relatively larger error on the second and third one. When the initialization is~\eqref{eq:p*no-overlap}, the subsequent gradient steps appear redundant.

Then, we repeat this experiment $M=48$ independent runs (as in Figure~\ref{fig-biais} and \ref{fig-variance}) to estimate the bias and variance of the estimation. The weights $p_{\mathrm{true}}^*$ and the means of the Gaussian distribution in each mode are the same in all the experiments, and the same as in Figure~\ref{fig-K10}; only the $K\times N$ samples are refreshed at each run. Denoting by $p^{(i)} \in \Delta_K$ the output of the $i^{th}$ run and $\bar p^{(M)} = \frac1M\sum_{i=1}^M p^{(i)} $, we observe
\begin{equation}
\label{eq:biaisvariance}
\left|\bar p^{(M)} - p_{\mathrm{true}}^*\right| \simeq 0.093\,,\qquad \frac{1}{M-1} \sum_{i=1}^M |p^{(i)} - \bar p^{(M)}|^2 \simeq  3. 10^{-5}\,\end{equation}
In this experiment, the bias is mostly due to the incorrect estimation of the second and third weights as in Figure~\ref{fig-K10} (right), as we observe $(\bar p^{(M)}_2,\bar p^{(M)}_3) \simeq (0.23,
0.16)$ and $|(0.23,0.16)-(0.3,0.1)| \simeq 0.092$. This means that the specific means that have been generated for these two modes have made them difficult to distinguish.

Finally, we repeat the same experiment in a more systematic way, varying $K$ and $d$, and estimating the bias and variance on $M=48$ runs as in~\eqref{eq:biaisvariance} (for each value of $(K,d)$, the weights $p_{\mathrm{true}}^*$ and the means of the Gaussian distributions are generated as above, and then are kept fixed for the $M$ runs). We test $d\in\{2,10,20,50\}$ and $K\in\{4,8,12\}$. The other parameters are as in Figure~\ref{fig-K10}, except that we only sample $N=1000$ points per Gaussian mode and $\ell = \min(d,10)$. The results are provided in Figure~\ref{fig-Kd}. It is not completely straightforward to compare the different values because for each of them the means of the target Gaussian modes are different, so that some parameters may lead to easier problems than others (if the modes are very separate for instance). However, we can see that the bias in dimension 2 is larger than in higher dimensions, and even more when $K$ is large, which is because the means of the Gaussian modes are i.i.d. $\mathcal N(0,I_d)$, so that in low dimension there is a lot of overlap and the different parts of the mixture are hard to distinguish.  Among the tested dimensions, the bias is lowest at dimension $10$, where modes are well separated and the density estimation is still easy. Then,  in higher dimensions, with only $N=1000$ sample per mode, the density estimation is less accurate. That being said, in all these settings, the results are satisfying.

\begin{figure}
\begin{center}
\begin{minipage}{0.48\linewidth}
\includegraphics[width=\textwidth]{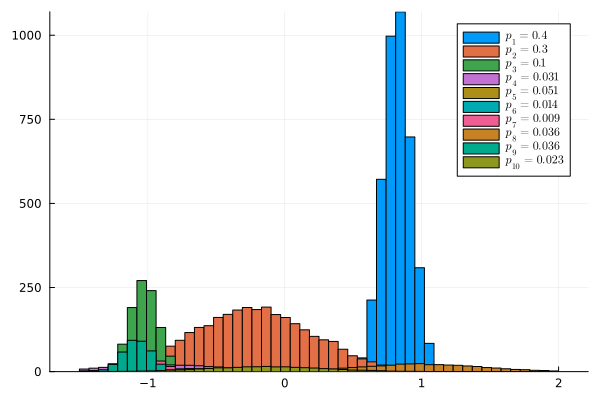}
\end{minipage}
\begin{minipage}{0.48\linewidth}
\includegraphics[width=\textwidth]{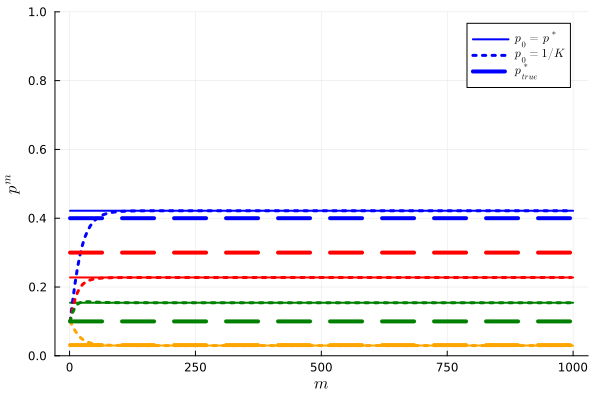}
\end{minipage}
\end{center}
\caption{Mixture of $K=10$ Gaussian distributions. Left: histograms  of the first coordinate $X_1$ in each cluster, weighted by $p^*_{true}$ (given in the label). Right: evolution of $(p_1^m,\dots,p_4^m)$ along the iterations $m$ of the gradient descent, either initialized at~\eqref{eq:p*no-overlap} (solid line) or $1/K$ (dots), with the true value $p^*_{true}$ (dashed line) for reference. The same color is used for the same coordinates (i.e. blue for the first coordinate, etc.). 
}\label{fig-K10}
\end{figure}

\begin{figure}
\centering
\[
\begin{array}{|l|c|c|c|c|}
\hline
\hbox{\diagbox{$K$}{$d$}}
 &   2 & 10 & 20 & 50  \\ 
\hline
4   & 0.02 & 4.10^{-4}  & 0.09 & 0.2  \\ \hline
8   &  0.2 & 3.10^{-4} & 0.1 & 0.07 \\ \hline
12   & 0.1  & 4.10^{-4} & 0.06 & 0.1 \\ \hline
\end{array}
\qquad
\begin{array}{|l|c|c|c|c|}
\hline
\hbox{\diagbox{$K$}{$d$}}
 &   2 & 10 & 20 & 50  \\ 
\hline
4   & 9.10^{-6} & 9.10^{-6} &1.10^{-4} & 2.10^{-4} \\ \hline
8   & 2.10^{-4} & 1.10^{-5}&  8.10^{-5} &  2.10^{-4} \\ \hline
12   & 6.10^{-5} & 6.10^{-6} & 1.10^{-4} & 2.10^{-4} \\ \hline
\end{array}
\]

\caption{Bias (left) and variance (right) estimated on $M=48$ runs, varying  $(K,d)$.}\label{fig-Kd}
\end{figure}

\paragraph{An example with Langevin sampler and tempering.} To provide a completely different situation than Gaussian mixtures, we consider the following settings. In dimension $d=2$, consider the target measure $\mu_{\beta}$ with density proportional to $e^{-\beta U}$ for some $\beta>0$ and
\begin{equation}
\label{eq:U}
U(x,y) = \frac{x^4}{4} - \frac{x^2}{2} + \frac{x^3}{5} + m(x) \frac{y^2}{2},\qquad m(x) = \frac1{10}+\frac{3}{1 + e^{2x}}\,. 
\end{equation}
Write $z=(x,y)$. As represented in Figure~\ref{fig-Langevin} (up right), this is a double-well potential, with a deep narrow well around $(-1.5,0)$ and shallow broad one around $(1,0)$. The samples are generated using $N=1000$ independent trajectories of an unadjusted Langevin sampler, with transition given by
\begin{equation}
\label{eq:EulerMaruyama}
Z_{k+1} = Z_k - h\na U(Z_k) + \sqrt{2h/\beta} G_k
\end{equation}
with i.i.d. $G_k \sim \mathcal N(0,I_2)$ and a stepsize $h=10^{-2}$. The initial conditions are distributed according to $\mathcal N(0,I_2)$.  With the objective to target the measure $\mu_{\beta_1}$ with $\beta_1=10$, we use a tempering strategy (representative of strategies used in practice, see e.g. \cite{chehab2024provable} and references within):  we first perform $M=1000$ steps of~\eqref{eq:EulerMaruyama} (for each of the $N$ replicas) with $\beta = \beta_0 = 1$ (i.e. at a higher temperature than the target). The blue dots in Figure~\ref{fig-Langevin} (up right) are the positions of the $N$ processes after this step. Then we perform again $M$ transitions~\eqref{eq:EulerMaruyama} with the correct $\beta=\beta_1$ (which gives the white dots). We apply our algorithm on these $M$ final points. We sort them in two clusters: cluster $1$ (resp. $2$) gathers the points for which the first coordinate $x$ is positive (resp. negative). 

As before, the densities $\nu_1$ and $\nu_2$ are estimated with Gaussian KDE. The result is displayed in Figure~\ref{fig-Langevin} (bottom left), which shows the points $(x_i,\nu_1(z_i))_{i\in I_1}$ in blue, $(x_i,\nu_2(z_i))_{i\in I_1}$ in orange, $(x_i,\nu_2(z_i))_{i\in I_2}$ in purple and $(x_i,\nu_1(z_i))_{i\in I_2}$ in green. As expected, since by design the two clusters have disjoint supports, the crossed terms (i.e. $\nu_k(z_i)$ with $z_i \notin I_k$) are essentially zero.

Our target is to recover the correct probability that $x>0$ under $\mu_{\beta_1}$. By design of the two clusters, our estimator~\eqref{eq:weightedestimator} is simply given by the estimated weight of the first cluster.  The result is given in Figure~\ref{fig-Langevin} (bottom right). The red dashed line is the reference (probability under $\mu_{\beta_1}$), computed with a deterministic quadrature (similarly the green line is the probability under $\mu_{\beta_0}$, for comparison). We plot $p_1^m$ the estimator for the probability as a function of the iteration $m$ of the gradient descent, in two situations: with the initialization~\eqref{eq:p*no-overlap} (in which case we see that the gradient steps are useless, which is to be expected since by design the two clusters have separate supports) in solid blue line, and in the case $p^0 = (n_1/N,n_2/N)$ (with $n_i$ the number of points in cluster $i$; this would be the estimator without the reweighting procedure) in blue dots. We see that the initial guess~\eqref{eq:p*no-overlap}  is already very good, and that in the other situation, the estimator without reweighting is much too close to the value at $\beta=\beta_0$ (which is not surprising since, when the temperature decreases, the particles just fall in the mode they were already and are unlikely to perform a transition afterwards)  but then the gradient descent converges quickly to the correct value.

\begin{figure}
\begin{center}
\begin{minipage}{0.45\linewidth}
\includegraphics[width=\textwidth]{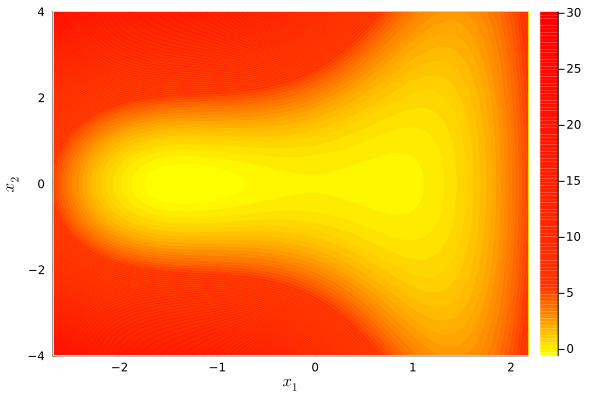}
\end{minipage}
\begin{minipage}{0.45\linewidth}
\includegraphics[width=\textwidth]{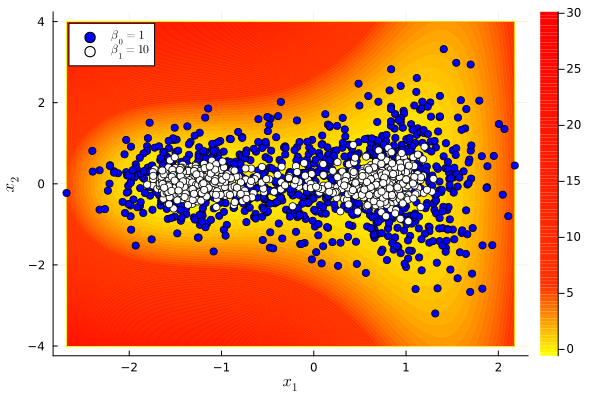}
\end{minipage}
\begin{minipage}{0.45\linewidth}
\includegraphics[width=\textwidth]{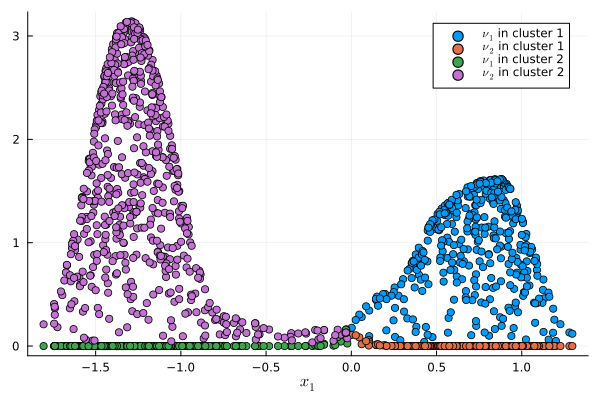}
\end{minipage}
\begin{minipage}{0.45\linewidth}
\includegraphics[width=\textwidth]{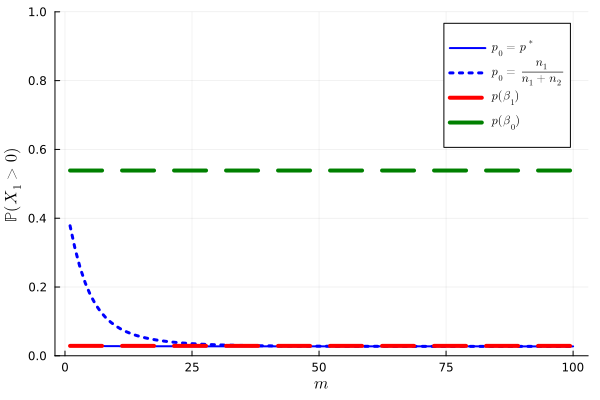}
\end{minipage}
\end{center}
\caption{Double-well tempered Langevin. Up left: potential $U$ from~\eqref{eq:U}; Up right: $N=10^3$ samples after the simulation at temperature $\beta_0^{-1}$ (blue) and then $\beta_1^{-1}$ (white); Bottom left: density estimation; Bottom right: Evolution of the estimator along the gradient descent. }\label{fig-Langevin}
\end{figure}

\paragraph{Individual weights.} To conclude, let us illustrate on a simple one-dimensional situation the case where the samples are not clustered, so that each sample $x_i$ has its own weight $p_i$ and the final estimator for the target $\mu$ is the weighted empirical distribution $ \tilde \pi(p)=\sum_{i=1}^N p_i \delta_{x_i}$. In this situation, the density estimation in each cluster makes no sense. However, we see that in our reweighting algorithm, in fact, what is necessary is to have a density estimation of $\tilde \pi(p^m)$ at each iteration $m$ of the gradient descent. Up to now we have done this by summing the densities estimated on each cluster, but in principle we can also perform a full density estimation on all the weighted samples at each iteration. Alternatively, in low dimension, a simpler approach (which we are going to follow) is to use a Gaussian kernel density with some bandwidth $\sigma$ for $\delta_{x_i}$ for each $i\in \cco 1,N\ccf$, which corresponds to the settings we have used up to now with~\eqref{eq:estim-densite} being simply $\delta_{x_i} \simeq \mathcal N(x_i,\sigma^2)$. In particular, this amounts to take $\pi(p) = \tilde \pi(p) \star \mathcal N(0,\sigma^2)$. Having in mind that what we need is the density $\pi(p)$ and not the individual densities $\mathcal N(x_i,\sigma^2)$, we scale $\sigma^2$ following the classical Sheather and Jones's method \cite{sheather1991reliable} applied to the initial empirical distribution $\tilde \pi(p^0) = \frac1N \sum_{i=1}^N \delta_{x_i}$ (with the initial condition $p^0_k = 1/N$ for all $k\in\cco 1,N\ccf$). Again, in principle we could re-compute this bandwidth for $\tilde \pi(p^m)$ at each iteration of the gradient descent but instead here we simply keep the $\sigma$ initially computed on $\tilde\pi(p^0)$).

We consider the following toy model: the samples are i.i.d. copies of $Y+Z$ where $Y \sim \mathcal N(0,1)$ and $Z\sim \mathcal U([-2,4])$, the target being $\mu = \frac12 \mathcal N(1,0.25) + \frac12 \mathcal N(-1,0.25)$. We perform $N_{\mathrm{iter}}=1000$ iterations of gradient descent with step-size $\delta=0.05$. Two experiments are performed, respectively with $N=100$ and $N=10^4$.

The results are displayed in Figure~\ref{fig-inidividuel} ($N=100$ on the left, $10^4$ on the right). As can be seen on the cumulative distribution function (bottom graphs), the method yields good results. Moreover, we see that the weights distribution (middle graphs) is smooth (close to $p_i  \propto \mu(x_i)/\nu(x_i)$ with $\nu$ the density of the data). In higher dimension, the optimization problem might be less well-conditioned and it might be necessary to add some regularization in the objective, for instance the Fisher Information of $\pi(p)$ with respect to $\mu$, or to consider a function form $p_i=\rho_{\theta}(x_i)$ for some parametrized function $\rho_{\theta}$. However, we only present this one-dimensional illustration as this variant of the algorithm is not our focus.

\begin{figure}
\begin{center}
\begin{minipage}{0.45\linewidth}
\includegraphics[width=\textwidth]{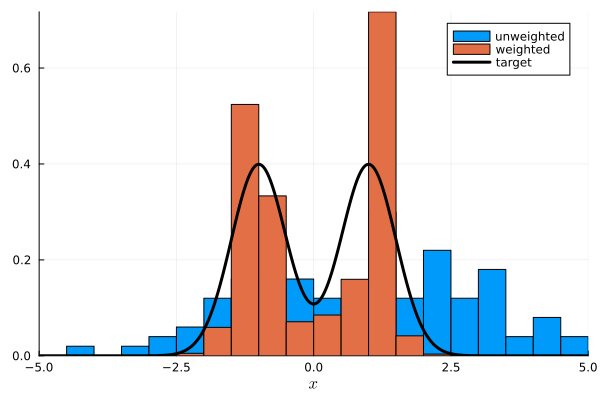}
\end{minipage}
\begin{minipage}{0.45\linewidth}
\includegraphics[width=\textwidth]{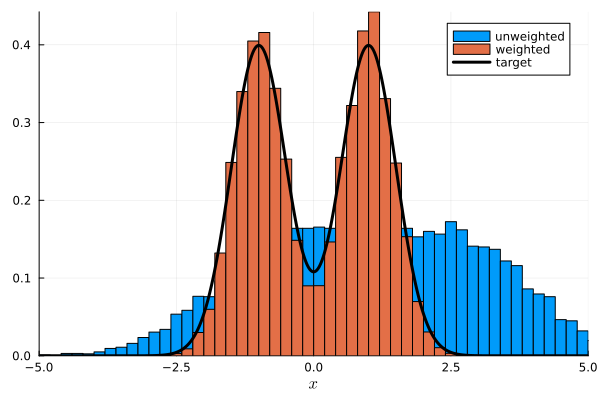}
\end{minipage}
\begin{minipage}{0.45\linewidth}
\includegraphics[width=\textwidth]{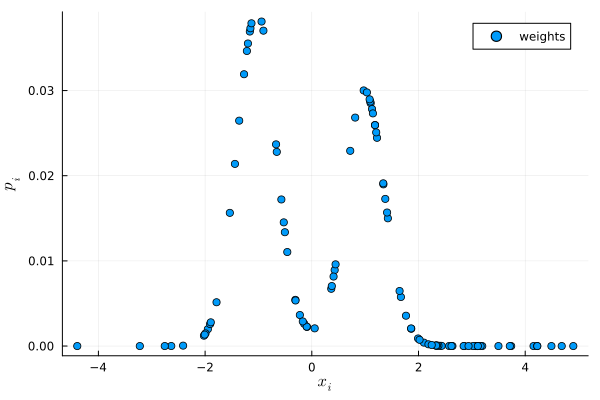}
\end{minipage}
\begin{minipage}{0.45\linewidth}
\includegraphics[width=\textwidth]{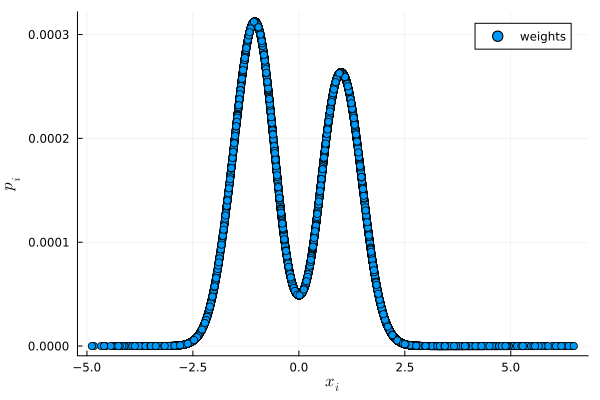}
\end{minipage}
\begin{minipage}{0.45\linewidth}
\includegraphics[width=\textwidth]{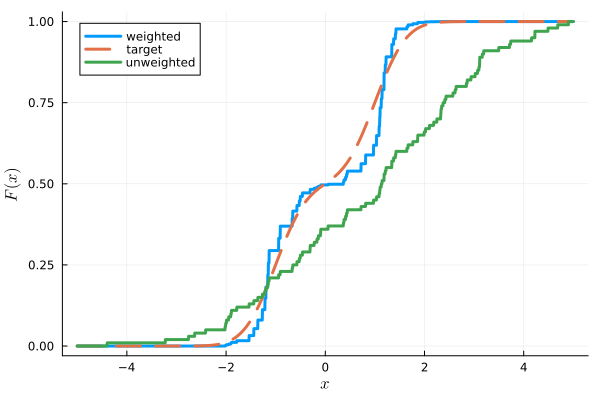}
\end{minipage}
\begin{minipage}{0.45\linewidth}
\includegraphics[width=\textwidth]{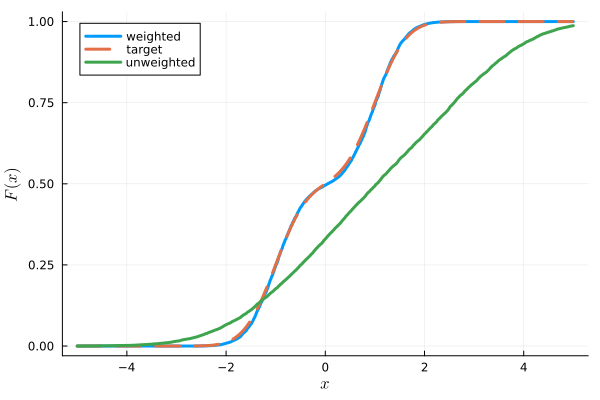}
\end{minipage}
\end{center}
\caption{Individual weights with $N$ samples (left: $N=10^2$, right: $N=10^4$). Top: initial (blue) and final reweighted (orange) data, with the target density in black. Middle: final weights $(x_i,p_i)$. Bottom: cumulative distribution function (green for the unweighted data, blue for the weithed one, and dashed orange for the target). }\label{fig-inidividuel}
\end{figure}

\paragraph{Conclusion.} All these experiments on synthetic data show that the method performs well up to dimension of the order of 100 and a dozen of modes.  Moreover, it is more efficient when the clusters are disjoint, in which case the gradient descent is useless and the output given by~\eqref{eq:p*no-overlap} is already good.

\subsection*{Acknowledgments}

The research of PM is supported by the project CONVIVIALITY (ANR-23-CE40-0003) of the
French National Research Agency. The author would like to thank Tony Lelièvre, Anna Korba and Marylou Gabrié for fruitful
discussions.  PM acknowledges the work of Rayan Autones who performed, during a research internship, preliminary simulations with algorithms in the same spirit as the one presented here. The total amount of generative artificial intelligence tools involved in this work is exactly zero.

\bibliographystyle{plain}
\bibliography{biblio}

@article{kivinen1997exponentiated,
  title={Exponentiated gradient versus gradient descent for linear predictors},
  author={Kivinen, Jyrki and Warmuth, Manfred K},
  journal={Information and computation},
  volume={132},
  number={1},
  pages={1--63},
  year={1997},
  publisher={Elsevier}
}

@article{schonle2025sampling,
  title={Sampling metastable systems using collective variables and Jarzynski--Crooks paths},
  author={Sch{\"o}nle, Christoph and Gabri{\'e}, Marylou and Leli{\`e}vre, Tony and Stoltz, Gabriel},
  journal={Journal of Computational Physics},
  volume={527},
  pages={113806},
  year={2025},
  publisher={Elsevier}
}

@article{schonle2025efficient,
  title={Efficient Monte-Carlo sampling of metastable systems using non-local collective variable updates},
  author={Sch{\"o}nle, Christoph and Carbone, Davide and Gabri{\'e}, Marylou and Leli{\`e}vre, Tony and Stoltz, Gabriel},
  journal={arXiv preprint arXiv:2512.16812},
  year={2025}
}

@article{molina2024active,
  title={Active learning of Boltzmann samplers and potential energies with quantum mechanical accuracy},
  author={Molina-Taborda, Ana and Cossio, Pilar and Lopez-Acevedo, Olga and Gabri{\'e}, Marylou},
  journal={Journal of Chemical Theory and Computation},
  volume={20},
  number={20},
  pages={8833--8843},
  year={2024},
  publisher={ACS Publications}
}

@article{jarzynski2002targeted,
  title={Targeted free energy perturbation},
  author={Jarzynski, Christopher},
  journal={Physical Review E},
  volume={65},
  number={4},
  pages={046122},
  year={2002},
  publisher={APS}
}

@article{wirnsberger2020targeted,
  title={Targeted free energy estimation via learned mappings},
  author={Wirnsberger, Peter and Ballard, Andrew J and Papamakarios, George and Abercrombie, Stuart and Racani{\`e}re, S{\'e}bastien and Pritzel, Alexander and Jimenez Rezende, Danilo and Blundell, Charles},
  journal={The Journal of Chemical Physics},
  volume={153},
  number={14},
  year={2020},
  publisher={AIP Publishing}
}

@article{bennett1976efficient,
  title={Efficient estimation of free energy differences from Monte Carlo data},
  author={Bennett, Charles H},
  journal={Journal of Computational Physics},
  volume={22},
  number={2},
  pages={245--268},
  year={1976},
  publisher={Elsevier}
}

@inproceedings{jia2020normalizing,
  title={Normalizing constant estimation with Gaussianized bridge sampling},
  author={Jia, He and Seljak, Uros},
  booktitle={Symposium on Advances in Approximate Bayesian Inference},
  pages={1--14},
  year={2020},
  organization={PMLR}
}

@inproceedings{hoffman2017learning,
  title={Learning deep latent Gaussian models with Markov chain Monte Carlo},
  author={Hoffman, Matthew D},
  booktitle={International conference on machine learning},
  pages={1510--1519},
  year={2017},
  organization={PMLR}
}

@inproceedings{DeFreitas,
author = {de Freitas, Nando and H\o{}jen-S\o{}rensen, Pedro and Jordan, Michael I and Russell, Stuart},
title = {Variational MCMC},
year = {2001},
isbn = {1558608001},
publisher = {Morgan Kaufmann Publishers Inc.},
address = {San Francisco, CA, USA},
booktitle = {Proceedings of the Seventeenth Conference on Uncertainty in Artificial Intelligence},
pages = {120–127},
numpages = {8},
location = {Seattle, Washington},
series = {UAI'01}
}

@inproceedings{salimans2015markov,
  title={Markov chain monte carlo and variational inference: Bridging the gap},
  author={Salimans, Tim and Kingma, Diederik and Welling, Max},
  booktitle={International conference on machine learning},
  pages={1218--1226},
  year={2015},
  organization={PMLR}
}

@inproceedings{ruiz2019contrastive,
  title={A contrastive divergence for combining variational inference and mcmc},
  author={Ruiz, Francisco and Titsias, Michalis},
  booktitle={International conference on machine learning},
  pages={5537--5545},
  year={2019},
  organization={PMLR}
}

@article{campbell2019universal,
  title={Universal boosting variational inference},
  author={Campbell, Trevor and Li, Xinglong},
  journal={Advances in Neural Information Processing Systems},
  volume={32},
  year={2019}
}

@phdthesis{minka2001family,
  title={A family of algorithms for approximate Bayesian inference},
  author={Minka, Thomas Peter},
  year={2001},
  school={Massachusetts Institute of Technology}
}

@article{belkacemi2021chasing,
  title={Chasing collective variables using autoencoders and biased trajectories},
  author={Belkacemi, Zineb and Gkeka, Paraskevi and Leli{\`e}vre, Tony and Stoltz, Gabriel},
  journal={Journal of chemical theory and computation},
  volume={18},
  number={1},
  pages={59--78},
  year={2021},
  publisher={ACS Publications}
}

@article{albergo2024nets,
  title={Nets: A non-equilibrium transport sampler},
  author={Albergo, Michael S and Vanden-Eijnden, Eric},
  journal={arXiv preprint arXiv:2410.02711},
  year={2024}
}

@article{ebrahimi2022symmetry,
  title={Symmetry-adapted restraints for binding free energy calculations},
  author={Ebrahimi, Mina and H{\'e}nin, J{\'e}r{\^o}me},
  journal={Journal of Chemical Theory and Computation},
  volume={18},
  number={4},
  pages={2494--2502},
  year={2022},
  publisher={ACS Publications}
}

@article{doi:10.1021/acs.jpcb.3c07075,
author = {Lelièvre, Tony and Pigeon, Thomas and Stoltz, Gabriel and Zhang, Wei},
title = {Analyzing Multimodal Probability Measures with Autoencoders},
journal = {The Journal of Physical Chemistry B},
volume = {128},
number = {11},
pages = {2607-2631},
year = {2024},
doi = {10.1021/acs.jpcb.3c07075},
    note ={PMID: 38466759},
URL = {     https://doi.org/10.1021/acs.jpcb.3c07075
},
eprint = {
        https://doi.org/10.1021/acs.jpcb.3c07075
}
}

@article{trizio2024advanced,
  title={Advanced simulations with PLUMED: OPES and machine learning collective variables},
  author={Trizio, Enrico and Rizzi, Andrea and Piaggi, Pablo M and Invernizzi, Michele and Bonati, Luigi},
  journal={arXiv preprint arXiv:2410.18019},
  year={2024}
}

@article{celerse2022efficient,
  title={An efficient gaussian-accelerated molecular dynamics (gamd) multilevel enhanced sampling strategy: application to polarizable force fields simulations of large biological systems},
  author={C{\'e}lerse, Fr{\'e}d{\'e}ric and Inizan, Th{\'e}o Jaffrelot and Lagard{\`e}re, Louis and Adjoua, Olivier and Monmarch{\'e}, Pierre and Miao, Yinglong and Derat, Etienne and Piquemal, Jean-Philip},
  journal={Journal of Chemical Theory and Computation},
  volume={18},
  number={2},
  pages={968--977},
  year={2022},
  publisher={ACS Publications}
}

@article{invernizzi2022exploration,
  title={Exploration vs convergence speed in adaptive-bias enhanced sampling},
  author={Invernizzi, Michele and Parrinello, Michele},
  journal={Journal of Chemical Theory and Computation},
  volume={18},
  number={6},
  pages={3988--3996},
  year={2022},
  publisher={ACS Publications}
}

@article{journel2023switched,
  title={Switched diffusion processes for non-convex optimization and saddle points search},
  author={Journel, Lucas and Monmarch{\'e}, Pierre},
  journal={Statistics and Computing},
  volume={33},
  number={6},
  pages={139},
  year={2023},
  publisher={Springer}
}

@article{lelievre2024using,
  title={Using Witten Laplacians to locate index-1 saddle points},
  author={Leli{\`e}vre, Tony and Parpas, Panos},
  journal={SIAM Journal on Scientific Computing},
  volume={46},
  number={2},
  pages={A770--A797},
  year={2024},
  publisher={SIAM}
}

@inproceedings{Gabrie2021a,
author = {Gabri{\'e}, M. and Rotskoff, Grant M. and Vanden-Eijnden, Eric},
title = {Adaptive Monte Carlo augmented with normalizing flows},
journal = { Proceedings of the National Academy of Sciences},
volume = { 119},
number = { 10},
year = {2022}
}

@article{annealing,
author = {Lucas Journel and Pierre Monmarch{\'e}},
title = {{Convergence of the kinetic annealing for general potentials}},
volume = {27},
journal = {Electronic Journal of Probability},
number = {none},
publisher = {Institute of Mathematical Statistics and Bernoulli Society},
pages = {1 -- 37},
year = {2022},
doi = {10.1214/22-EJP891},
URL = {https://doi.org/10.1214/22-EJP891}
}

@article{soletskyi2025theoretical,
  title={A theoretical perspective on mode collapse in variational inference},
  author={Soletskyi, Roman and Gabri{\'e}, Marylou and Loureiro, Bruno},
  journal={Machine Learning: Science and Technology},
  volume={6},
  number={2},
  pages={025056},
  year={2025},
  publisher={IOP Publishing}
}

@article{del2006sequential,
  title={Sequential monte carlo samplers},
  author={Del Moral, Pierre and Doucet, Arnaud and Jasra, Ajay},
  journal={Journal of the Royal Statistical Society Series B: Statistical Methodology},
  volume={68},
  number={3},
  pages={411--436},
  year={2006},
  publisher={Oxford University Press}
}

@Article{Comer,
  author    = {Comer, Jeffrey and Gumbart, James C. and H{\'e}nin, J{\'e}r{\^o}me and Leli{\`e}vre, Tony and Pohorille, Andrew and Chipot, Christophe},
  title     = {{The Adaptive Biasing Force Method: everything you always wanted to know but were afraid to ask}},
  journal   = {{Journal of Physical Chemistry B}},
  year      = {2015},
  volume    = {119},
  number    = {3},
  pages     = {1129--1151},
  month     = {January},
  publisher = {{American Chemical Society}},
}

@ARTICLE{Gabrie,
       author = {{Grenioux}, Louis and {Noble}, Maxence and {Gabri{\'e}}, Marylou},
        title = "{Improving the evaluation of samplers on multi-modal targets}",
      journal = {arXiv e-prints},
         year = 2025,
        month = apr,
          eid = {arXiv:2504.08916},
        pages = {arXiv:2504.08916},
          doi = {10.48550/arXiv.2504.08916},
archivePrefix = {arXiv},
       eprint = {2504.08916},
 primaryClass = {stat.ML},
       adsurl = {https://ui.adsabs.harvard.edu/abs/2025arXiv250408916G}
}

@article{wang2019nonparametric,
  title={Nonparametric density estimation for high-dimensional data—Algorithms and applications},
  author={Wang, Zhipeng and Scott, David W},
  journal={Wiley Interdisciplinary Reviews: Computational Statistics},
  volume={11},
  number={4},
  pages={e1461},
  year={2019},
  publisher={Wiley Online Library}
}

@inproceedings{
midgley2023flow,
title={Flow Annealed Importance Sampling Bootstrap},
author={Laurence Illing Midgley and Vincent Stimper and Gregor N. C. Simm and Bernhard Sch{\"o}lkopf and Jos{\'e} Miguel Hern{\'a}ndez-Lobato},
booktitle={The Eleventh International Conference on Learning Representations },
year={2023},
url={https://openreview.net/forum?id=XCTVFJwS9LJ}
}

@article{biroli2026kernel,
  title={Kernel density estimators in large dimensions},
  author={Biroli, Giulio and M{\'e}zard, Marc},
  journal={SIAM Journal on Mathematics of Data Science},
  volume={8},
  number={1},
  pages={46--76},
  year={2026},
  publisher={SIAM}
}

@article{zimmermann2021nested,
  title={Nested variational inference},
  author={Zimmermann, Heiko and Wu, Hao and Esmaeili, Babak and van de Meent, Jan-Willem},
  journal={Advances in Neural Information Processing Systems},
  volume={34},
  pages={20423--20435},
  year={2021}
}

@article{domke2018importance,
  title={Importance weighting and variational inference},
  author={Domke, Justin and Sheldon, Daniel R},
  journal={Advances in neural information processing systems},
  volume={31},
  year={2018}
}

@ARTICLE{M66,
       author = {{Leli{\`e}vre}, Tony and {Lin}, Xuyang and {Monmarch{\'e}}, P.ierre},
        title = "{Convergence rates for an Adaptive Biasing Potential scheme from a Wasserstein optimization perspective}",
      journal = {arXiv e-prints},
         year = 2025,
        month = jan,
          eid = {arXiv:2501.17979},
        pages = {arXiv:2501.17979},
          doi = {10.48550/arXiv.2501.17979},
archivePrefix = {arXiv},
       eprint = {2501.17979},
 primaryClass = {math.PR},
       adsurl = {https://ui.adsabs.harvard.edu/abs/2025arXiv250117979L}
}

@article{ribeiro2018reweighted,
  title={Reweighted autoencoded variational Bayes for enhanced sampling (RAVE)},
  author={Ribeiro, Jo{\~a}o Marcelo Lamim and Bravo, Pablo and Wang, Yihang and Tiwary, Pratyush},
  journal={The Journal of chemical physics},
  volume={149},
  number={7},
  year={2018},
  publisher={AIP Publishing}
}

@article{cecchini2009calculation,
  title={Calculation of free-energy differences by confinement simulations. Application to peptide conformers},
  author={Cecchini, M and Krivov, SV and Spichty, M and Karplus, M},
  journal={The Journal of Physical Chemistry B},
  volume={113},
  number={29},
  pages={9728--9740},
  year={2009},
  publisher={ACS Publications}
}

@article{chehab2024provable,
  title={Provable convergence and limitations of geometric tempering for Langevin dynamics},
  author={Chehab, Omar and Korba, Anna and Stromme, Austin and Vacher, Adrien},
  journal={arXiv preprint arXiv:2410.09697},
  year={2024}
}

@article{westerlund2019inflecs,
  title={InfleCS: clustering free energy landscapes with Gaussian mixtures},
  author={Westerlund, Annie M and Delemotte, Lucie},
  journal={Journal of chemical theory and computation},
  volume={15},
  number={12},
  pages={6752--6759},
  year={2019},
  publisher={ACS Publications}
}

@article{dutordoir2018gaussian,
  title={Gaussian process conditional density estimation},
  author={Dutordoir, Vincent and Salimbeni, Hugh and Hensman, James and Deisenroth, Marc},
  journal={Advances in neural information processing systems},
  volume={31},
  year={2018}
}

@article{CAI2015161,
title = {Law of log determinant of sample covariance matrix and optimal estimation of differential entropy for high-dimensional Gaussian distributions},
journal = {Journal of Multivariate Analysis},
volume = {137},
pages = {161-172},
year = {2015},
issn = {0047-259X},
doi = {https://doi.org/10.1016/j.jmva.2015.02.003},
url = {https://www.sciencedirect.com/science/article/pii/S0047259X1500038X},
author = {T. Tony Cai and Tengyuan Liang and Harrison H. Zhou}
}

@inproceedings{miller2017variational,
  title={Variational boosting: Iteratively refining posterior approximations},
  author={Miller, Andrew C and Foti, Nicholas J and Adams, Ryan P},
  booktitle={International conference on machine learning},
  pages={2420--2429},
  year={2017},
  organization={PMLR}
}

@inproceedings{lange2022interpolating,
  title={Interpolating between sampling and variational inference with infinite stochastic mixtures},
  author={Lange, Richard D and Benjamin, Ari S and Haefner, Ralf M and Pitkow, Xaq},
  booktitle={Uncertainty in artificial intelligence},
  pages={1063--1073},
  year={2022},
  organization={PMLR}
}

@article{ezugwu2022comprehensive,
  title={A comprehensive survey of clustering algorithms: State-of-the-art machine learning applications, taxonomy, challenges, and future research prospects},
  author={Ezugwu, Absalom E and Ikotun, Abiodun M and Oyelade, Olaide O and Abualigah, Laith and Agushaka, Jeffery O and Eke, Christopher I and Akinyelu, Andronicus A},
  journal={Engineering Applications of Artificial Intelligence},
  volume={110},
  pages={104743},
  year={2022},
  publisher={Elsevier}
}

@inproceedings{jerfel2021variational,
  title={Variational refinement for importance sampling using the forward kullback-leibler divergence},
  author={Jerfel, Ghassen and Wang, Serena and Wong-Fannjiang, Clara and Heller, Katherine A and Ma, Yian and Jordan, Michael I},
  booktitle={Uncertainty in Artificial Intelligence},
  pages={1819--1829},
  year={2021},
  organization={PMLR}
}

@article{sheather1991reliable,
  title={A reliable data-based bandwidth selection method for kernel density estimation},
  author={Sheather, Simon J and Jones, Michael C},
  journal={Journal of the Royal Statistical Society: Series B (Methodological)},
  volume={53},
  number={3},
  pages={683--690},
  year={1991},
  publisher={Wiley Online Library}
}

\end{document}